\newcommand{\etal}{{\em et.al.}}
\newcommand{\fpi}{\frac{1}{2\pi}}
\newcommand{\ipi}{\int_0^{2\pi}}
\newcommand{\avgint}{\fpi \ipi}
\newcommand{\tint}{\int_{t}^{t + \tau}}
\newcommand{\A}{{\mathbf A}}
\newcommand{\flux}{\boldsymbol{\mathcal{F}}}
\newcommand{\source}{\boldsymbol{\mathcal{S}}}
\newcommand{\flus}{\flux^*}
\newcommand{\sours}{\source^*}
\newcommand{\hflux}{\boldsymbol{\mathcal{H}}}
\newcommand{\hfluxpd}{\boldsymbol{\mathcal{H}}^+_{E,\text{drain}}}
\newcommand{\vx}{{\vec x}}
\newcommand{\vv}{{\vec v}}
\newcommand{\vu}{{\mathbf u}}
\newcommand{\vU}{{\mathbf U}}
\newcommand{\vw}{{\mathbf w}}
\newcommand{\vW}{{\mathbf W}}
\newcommand{\vE}{{\mathbf E}}
\newcommand{\vt}{{\mathbf t}}
\newcommand{\tQ}{{\tilde Q}}
\newcommand{\htot}{H}
\newcommand{\dt}{\Delta t}
\newcommand{\dtd}{\dt_{C_i,\text{drain}}}
\newcommand{\dx}{\Delta x}
\newcommand{\eps}{\varepsilon}
\newcommand{\luka}{Luk\'a\v{c}ov\'a}
\newcommand{\ec}{\vE^{\text{const}}}
\newcommand{\eb}{\vE^{\text{bilin}}}
\DeclareMathOperator{\sgn}{sgn}
\DeclareMathOperator{\sech}{sech}
\theoremstyle{definition}
\newtheorem{alg}{Algorithm}[section]
\begin{document}
  
\title[FVEG schemes for the SWE with dry beds]{Finite Volume Evolution
  Galerkin Methods for the Shallow Water Equations with Dry Beds}

\author[A. Bollermann, S. Noelle and M. \luka-Medvid'ov\'{a}]{Andreas
  Bollermann\affil{1}\comma\corrauth,
Sebastian Noelle\affil{1} and Maria \luka-Medvid'ov\'{a}\affil{2}}

\email{{\tt bollermann@igpm.rwth-aachen.de} (A.~Bollermann)}

\address{\affilnum{1}\ IGPM, RWTH Aachen, Templergraben 55, 52062 Aachen,
  Germany. \\ \affilnum{2}\ Department of Mathematics, University of Technology
  Hamburg, Schwarzenbergstra{\ss}e 95, 21073 Hamburg, Germany}

\begin{abstract}
  We present a new Finite Volume Evolution Galerkin (FVEG) scheme for
  the solution of the shallow water equations (SWE) with the bottom
  topography as a source term. Our new scheme will be based on the
  FVEG methods presented in (\luka, Noelle and Kraft, {\em
    J. Comp. Phys.} 221, 2007), but adds the possibility to handle dry
  boundaries. The most important aspect is to preserve the positivity
  of the water height. We present a general approach to ensure this
  for arbitrary finite volume schemes. The main idea is to limit the
  outgoing fluxes of a cell whenever they would create negative water
  height. Physically, this corresponds to the absence of fluxes in the
  presence of vacuum. Well-balancing is then re-established by
  splitting gravitational and gravity driven parts of the
  flux. Moreover, a new entropy fix is introduced that improves the
  reproduction of sonic rarefaction waves.
\end{abstract}

\keywords{ Well-balanced schemes,  Dry boundaries,  Shallow water
  equations,   Evolution Galerkin schemes,  Source terms}
\ams{65M08, 76B15, 76M12, 35L50}
\pac{02.60.Cb,  47.11.Df, 92.10.Sx}

\maketitle

\section{Introduction}
\label{sec:intro}

The shallow water equations (SWE) are a mathematical model for the
movement of water under the action of gravity. Mathematically spoken,
they form a set of hyperbolic conservation laws, which can be extended
by source terms like the influence of the bottom topography, friction
or wind forces. In this case, we will speak of a balance law. For
simplicity, this work will consider the variation of the bottom as the
only source term.

Many important properties of the model rely on the fact that the water
height is strictly positive. Despite this, typical relevant problems
include the occurrence of dry areas, like dam break problems or the
run-up of waves at a coast, with tsunamis as the most impressive
example. So for simulations of these problems, we have to develop
numerical schemes that can handle the (possibly moving) shoreline in a
stable and efficient way.  Another crucial point in solving balance
laws is the treatment of the source terms. For precise solutions, it
is necessary to evaluate the source term in such a way that certain
steady states are kept numerically, i.e. the numerical flux and
the numerical source term cancel each other exactly for equilibrium
solutions.


In the last years, many groups contributed to the solution of the
difficulties described above. In
\cite{AudusseBouchutBristeauEtAl2004}, Audusse \etal{} proposed a
reconstruction procedure where the free surface and water height are
reconstructed and the bottom slopes are computed from these. This
guarantees the positivity of the water height and gives a
well-balanced scheme at the same time. Begnudelli and Sanders
developed a scheme for triangular meshes including scalar transports
in \cite{BegnudelliSanders2006}. They proposed a strategy how to
exactly represent the free surface in partially wetted cells, leading
to improved results at the wetting/drying front. In
\cite{BrufauVazquez-CendonGarcia-Navarro2002}, Brufau \etal{} analyse
how to deal with flow on an adverse slope. They locally modify the
bottom topography in certain situations to avoid unphysical run-ups or
wave creation at the dry boundary. Gallardo \etal{} discussed various
solutions of the Riemann problem at the front and used them in a
modified Roe scheme. They then used the local hyperbolic harmonic
method from Marquina (cf. \cite{Marquina1994}) in the reconstruction
step to achieve higher order, see
\cite{GallardoParesCastro2007}. Kurganov and Petrova proposed a
central-upwind scheme that is well-balanced and positivity preserving
in \cite{KurganovPetrova2007}. It is based on a continuous, piecewise
linear approximation of the bottom topography and performs the
computation in terms of the free surface instead of the relative water
height to simplify the well-balancing. The last feature is also a
building block in the work of Liang and Marche
\cite{LiangMarche2009}. They also provide a method to extend this
well-balancing feature to situations including wetting/drying
fronts. Liang and Borthwick \cite{LiangBorthwick2009} used adaptive
quad-tree grids to improve the efficiency of their schemes. Wetting and
drying effects are handled as well as friction terms. In the context
of residual distribution methods, Ricchiuto and Bollermann developed a
positivity preserving and well-balanced scheme for unstructured
triangulations \cite{RicchiutoBollermann2009}.

The finite volume evolution Galerkin (FVEG) methods developed by
\luka, Morton and Warnecke, cf. \cite{LukacovaMortonWarnecke2004,
  LukacovaSaibertovaWarnecke2002, LukacovaNoelleKraft2007}, have been
successfully applied to the SWE in
\cite{LukacovaNoelleKraft2007}. They are based on the evaluation of so
called {\em evolution operators} which predict values for the finite
volume update. Thanks to these operators, the schemes take into
account all directions of wave propagation, enabling them to precisely
catch multidimensional effects even on Cartesian grids.  These schemes
show a very good accuracy even on relatively coarse meshes compared to
other state of the art schemes and they are also competitive in terms
of efficiency (cf. \cite{LukacovaNoelleKraft2007}).

However, the existing FVEG schemes are not able to deal with dry
boundaries. Thus in this work we will present a method to preserve the
positivity of the water height with an arbitrary finite volume
method. To achieve this, we reduce the outflow on draining cells such
that the water height does not become negative. We will then provide
the means to preserve the {\em well-balancing} property under the
presence of dry areas, and apply both techniques to a new FVEG method.
In addition, we present a new entropy fix for the FVEG schemes that
improves the reproduction of sonic rarefaction waves.

We start our paper with a short presentation of the SWE in
Section~\ref{sec:SWE}. Section~\ref{sec:FVEG} describes the FVEG
method we will start from. The arising difficulties by introducing dry
areas and means to overcome them are described in
Section~\ref{sec:dry}, which is the main part of the paper. Finally,
in Section~\ref{sec:res}, we will show selected numerical test cases
that demonstrate the performance of our schemes.

\section{The Shallow Water Equations}
\label{sec:SWE}
\subsection{Balance Law Form}

We consider the shallow water system in balance form
\begin{equation}
\label{eq:cSWE}
\frac{\partial \vu}{\partial t} + \nabla\cdot \flux (\vu) =
-\source(\vu, \vec x).
\end{equation}
The conserved variables and the flux are given by
\begin{equation}
\label{eq:watervariables}
\vu = \begin{pmatrix} h \\ hv_1 \\ hv_2 \end{pmatrix},
 \quad 
\flux(\vu) = \left(\flux_{1}(\vu)\, \flux_{2}(\vu) \right) = 
\begin{pmatrix}
hv_1 & hv_2 \\
hv_1^2 + g\frac{h^2}{2} & hv_1v_2 \\
hv_1v_2 & hv_2^2 + g\frac{h^2}{2}
\end{pmatrix},
\end{equation}
where $h$ denotes the relative water height, $\vv = (v_1,v_2)^T$ the flow
speed and $g$ the (constant) gravity acceleration. The source term
$\source(\vu, \vec x)$ is given by
\begin{equation}
  \label{eq:source}
\source (\vu, \vec x) = gh
\begin{pmatrix}
0 \\ \frac{\partial
      b(\vec x)}{\partial x_1} \\ \frac{\partial b(\vec x)}{\partial x_2}  
\end{pmatrix}
\end{equation}
with $b(\vec x)$ the local bottom height.  We also introduce the {\em
  free surface level}, or total water height, 
\begin{equation}
\label{eq:htot}
\htot(\vec x)  = h(\vec x) + b(\vec x)
\end{equation}
and the so-called {\em speed of sound}
\begin{equation}
  \label{eq:sos}
  c = \sqrt{gh}.
\end{equation}
This is the velocity of the gravity waves and should not be confused
with the physical sound speed in air.

\subsection{Quasi-linear Form}
\label{sec:quasi-linear}

For the derivation of the evolution operators in Section~\ref{sec:op},
it is helpful to rewrite (\ref{eq:cSWE}) in primitive variables. The
system then takes the form
\begin{equation}
  \label{eq:pSWE}
\vw_t + \A_1(\vw)\vw_{x_1} + \A_2(\vw)\vw_{x_2} = \vt
\end{equation}
with
\begin{equation}
\label{eq:pvar}
\vw = \begin{pmatrix}
    h \\ v_1 \\ v_2
  \end{pmatrix}, \quad
\A_1 = \begin{pmatrix}
    v_1 & h & 0 \\ g & v_1 & 0 \\ 0 & 0 & v_1
  \end{pmatrix}, \quad
\A_2 = \begin{pmatrix}
    v_2 & 0 & h \\ 0 & v_2 & 0 \\ g & 0 & v_2
  \end{pmatrix}
\end{equation}
and the source term
\begin{equation}
  \vt = \begin{pmatrix}
    0 \\ -gb_{x_1} \\ -gb_{x_2}
  \end{pmatrix}.
\end{equation}

For each angle $\theta \in[0,2\pi)$ we define  the direction $ \vec
\xi(\theta) := (\cos\theta, \sin \theta)$.  As system (\ref{eq:cSWE})
is hyperbolic, for each of these directions and a fixed $\vw$
the matrix
\begin{equation}
  \label{eq:jac}
\A(\vw) = \vec\xi_1  \A_1(\vw) + \vec\xi_2  \A_2(\vw)  
\end{equation}
has real eigenvalues
\begin{equation}
  \label{eq:ev}
  \lambda_1 = \vv \cdot \vec \xi - c, \quad
  \lambda_2 = \vv \cdot \vec \xi, \quad
  \lambda_3 = \vv \cdot \vec \xi + c
\end{equation}
 and a full set of linearly independent eigenvectors
\begin{equation}
  \label{eq:evec}
r_1 = \begin{pmatrix}
    -1 \\ g \frac{\cos \theta}{c} \\ g \frac{\sin \theta}{c}
  \end{pmatrix}, \quad
r_2 = \begin{pmatrix}
    0 \\  \sin \theta \\ -\cos \theta
  \end{pmatrix}, \quad
r_3 = \begin{pmatrix}
    1 \\ g \frac{\cos \theta}{c} \\ g \frac{\sin \theta}{c}
  \end{pmatrix}.
\end{equation}

\subsection{Lake at Rest}
\label{sec:lar}

A trivial, but nevertheless important solution to (\ref{eq:cSWE}) is
the lake at rest situation, where the water is steady and the free
surface level is constant, i.e.  we have
\begin{equation}
  \label{eq:lar}
   \vv = (0,0)^T \text{ and } \htot(\vec x) = H_0.
\end{equation}
From (\ref{eq:htot}) we immediately get
\begin{equation}
  \label{eq:bal}
  \nabla h = - \nabla b
\end{equation}
and therefore (with (\ref{eq:cSWE})~--~(\ref{eq:source}) and $\vv =
(0,0)^T$)
\begin{equation}
  \label{eq:wb}
  \begin{pmatrix}
      0 \\
      g\frac{h^2}{2} \\
      0
    \end{pmatrix}_{x_1} +
  \begin{pmatrix}
      0\\
      0\\
      g\frac{h^2}{2}
    \end{pmatrix}_{x_2} =
  -gh \begin{pmatrix} 0 \\ 
      \frac{\partial  b(\vec x)}{\partial x_1} \\ 
      \frac{\partial b(\vec x)}{\partial x_2} 
    \end{pmatrix}.
\end{equation}
A scheme fulfilling a discrete analogon of (\ref{eq:wb}) exactly is
called {\em well-balanced}.

\section{FVEG Schemes}
\label{sec:FVEG}

Finite volume schemes are very popular for solving hyperbolic
conservation laws for several reasons. They represent the underlying
physics in a natural way and can be implemented very
efficiently. Nevertheless, nearly all of them are based on the
solution of one-dimensional Riemann problems and therewith a
dimensional splitting. This introduces some sort of a bias: Wave
propagation aligned with the grid is very well represented, whereas
waves oblique to the grid cannot be caught as accurate.

In the last decade \luka{} \etal{} developed a class of finite
volume evolution Galerkin schemes, see e.g.
\cite{LukacovaMortonWarnecke2000, LukacovaSaibertovaWarnecke2002,
  LukacovaWarneckeZahaykah2004}. The FVEG scheme is a
predictor-corrector method: In the predictor step a multidimensional
evolution is done, the corrector step is a finite volume update.

In this section we will recall the second order scheme presented in
\cite{LukacovaNoelleKraft2007}. This method will be the starting point
for our extensions for computations including dry beds in
Section~\ref{sec:dry}. Therefore we concentrate on the properties
playing a role in this context and limit ourselves to the main ideas
otherwise.

\subsection{Finite Volume Update}
\label{sec:fv}

For our computations, we use Cartesian grids, i.e. we divide our
computational domain $\Omega$ in rectangular cells $C_i$, separated by
edges $E$. On the edges, we have quadrature points $\vx_k$. The
subscript $i$ will always refer to a cell, whereas $k$ as a subscript
is used as a global index for quadrature points. If we talk about the
local quadrature points on a single edge, we use the index $j$
instead.

On each cell we define the initial value at as
\begin{equation}
  \label{eq:ca}
  \vu_i^0 := \vu_i(0) \approx \frac{1}{|C_i|}\int_{C_i} \vu(\vec x, 0)\,d\vec x
\end{equation}
where we use a Gaussian quadrature to approximate the
integral. Integrating (\ref{eq:cSWE}) on each cell, we can then define
the update as
\begin{equation}
  \label{eq:update}
  \vu_i^{n+1} = \vu_i^n  - \frac{1}{|C_i|}
  \int_{t^n}^{t^{n+1}} \left( \int_{\partial  C_i} 
    \flux(\vu(\vec x, t))\cdot \vec n \, d\vec x
    + \int_{C_i} \source(\vu(\vx, t),\vx)\, d\vec x\right)dt
\end{equation}
using the Gauss theorem. Here $\vu_i^n$ denotes cell average in $C_i$
at time $t^n$ and $\vec n$ is the outer normal. The solution on the
whole domain at time $t^n$ is then defined as
\begin{equation}
  \label{eq:globalu}
  \vU^n(\vx) := \vU(\vx, t^n) = \vu_i^n, \quad \vx \in C_i.
\end{equation}

For an approximation of (\ref{eq:update}), on each edge we define
three quadrature points $\vec x_{j}, j=1,2,3$, see Fig.~\ref{fig:quad}. 
\begin{figure}
  \centering
  \includegraphics{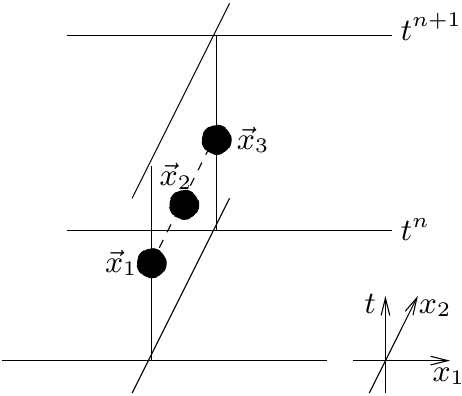}
  \caption{Quadrature points $\vec x_j$ for finite volume update}
  \label{fig:quad}
\end{figure}
These quadrature points are located on the vertices ($j=1,3$) and the
centre ($j=2$) of an edge. The flux over the edge is approximated by
using midpoint rule in time and Simpson's rule in space, hence we will
use the evolution operators from Section~\ref{sec:op} to predict point
values at the quadrature points at time $t^{n+1/2}$. The flux over an
edge is then defined as
\begin{equation}
  \label{eq:edgeflux}
  \flux_E := \sum_{j=1}^3 \alpha_j \flux(\vu_j^{n+1/2}) \cdot \vec n \approx
  \frac{1}{\dt |E|} \int_{t^n}^{t^{n+1}}\int_E \flux(\vu(\vec x,
  t))\cdot \vec n \,d\vec xdt.
\end{equation}
$\dt = t^{n+1}-t^n$ is the time step, $\vu_j^{n+1/2}$ is an
approximation to $\vu(\vx_j,t^{n+1/2})$ and the $\alpha_j$ represent the
weights of Simpson's rule, i.e. we have $\alpha_{1,3} = \frac{1}{6}$
and $\alpha_2 = \frac{2}{3}$. Finally the source term is discretised
as
\begin{equation}
  \label{eq:dsource}
  \source_i := g  \sum_{j=1}^3 \alpha_j
  \begin{pmatrix}
      0 \\ \frac12(\hat h_j^r + \hat h_j^l)(b_j^r-b_j^l) \\
      \frac12(\hat h_j^t + \hat h_j^b)(b_j^t-b_j^b) 
    \end{pmatrix}
  \approx \frac{1}{\dx \dt } \int_{t^n}^{t^{n+1}} \int_{C_i}
  \source(\vu)\, d\vec x.
\end{equation}
Here, $\dx$ is the length of the element, $\hat h_j$ represents the
first component of $\vu_j^{n+1/2}$ and $b_j = b(\vec x_j)$. The
superscripts stand for the edges surrounding the cell, namely the {\bf
  r}ight, {\bf l}eft, {\bf t}op and {\bf b}ottom
edge. Eqs.~(\ref{eq:update})--(\ref{eq:dsource}) lead to the fully
discrete scheme
\begin{equation}
  \label{eq:fullydiscrete}
  \vu_i^{n+1} = \vu_i^n -  \frac{\dt}{\Delta x} \left[\left( \sum_{E, E
    \subset \partial C_i} \flux_E\right) + \source_i \right].
\end{equation}
The time step is chosen as
\begin{equation}
  \label{eq:ts}
  \dt = \mu \min_{i}\dfrac{\dx}{\max_k |\lambda_k|}
\end{equation}
with $\lambda_k$ the eigenvalues from (\ref{eq:ev}) and $\mu < 1$ a CFL
number. For all the numerical experiments in Section~\ref{sec:res}, we
set $\mu =0.5$.

\subsection{Evolution Operators}
\label{sec:op}

As mentioned before, we use so-called evolution operators to predict
point values of the solution for the quadrature points in
(\ref{eq:edgeflux}). Indeed, a solution of (\ref{eq:pSWE}) can be seen
as a superposition of waves. So for a fixed point $P=(\vx, t)$, we
want to identify all the waves that contribute to the solution
there. This section will describe shortly the evolution operator,
present an exact formulation and give an example of a suitable
approximation allowing an efficient implementation.

The derivation of evolution operators is based on the quasi-linear
form of the system (\ref{eq:pSWE}). For any given point $P$, we
identify a suitable average value $\bar \vw$ and linearise the system
around $P$:
\begin{equation}
  \label{eq:lSWE}
\vw_t + \A_1(\bar \vw)\vw_{x_1} + \A_2(\bar \vw)\vw_{x_2} = \vt.
\end{equation}
The waves we are looking for propagate along the characteristics of
this system. Thus for a fixed direction $\vec \xi(\theta)$, we apply
an one-dimensional characteristic decomposition of the linearised
system. This allows us to identify different wave propagations
corresponding to the eigenvalues (\ref{eq:ev}), the {\em
  bicharacteristics}. The left side of Fig. \ref{fig:cone} shows an
illustration.
\begin{figure}
  \centering
  \hfill
  \includegraphics{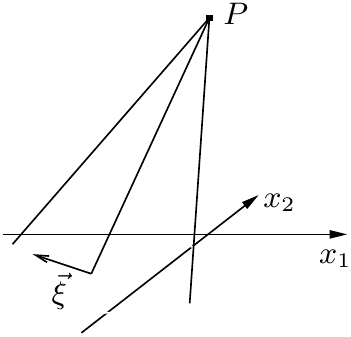}
  \hfill
  \includegraphics{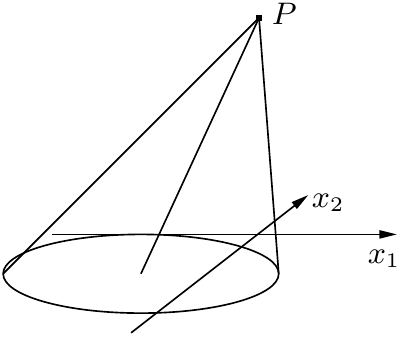}
  \hspace*{\fill}
  \caption{Bicharacteristic decomposition. Left: Bicharacteristic
    curves for a fixed direction $\vec\xi$. Right: Bicharacteristic
    cone.}
  \label{fig:cone}
\end{figure}
Integrating the decomposed system along the bicharacteristics, we get
an integral representation of the solution at point $P$. At this
point, the solution still depends on a particular direction $\vec
\xi(\theta)$ and therefore does not respect waves coming from other
directions. Thus we perform the decomposition for all angles $\theta
\in [0, 2\pi)$ and average the solution at $P$ over $\theta$. This
yields the exact evolution operator of (\ref{eq:lSWE}). The
combination of all bicharacteristics yields the {\em bicharacteristic
  cone} shown in the right picture of Fig.~\ref{fig:cone}. We
introduce the following notation for the peak $P = (\vx, t^n + \tau)$
and points on the sonic cone:
\begin{align}
  Q_0 &:= (x_1 +\tau \bar v_1,\; x_2 + \tau \bar v_2,\; t^n)\label{eq:Q0}\\
  \tQ_0 &:= (x_1 + (t^n + \tau - \tilde t) \bar v_1  \cos\theta,\; x_2 +
  (t^n + \tau - \tilde t) \bar v_2 \sin \theta,\; \tilde t)\label{eq:tQ0}\\
  Q &:= (x_1 + \tau (\bar c + \bar v_1) \cos\theta,\; x_2 
  + \tau (\bar c + \bar v_2) \sin \theta,\; t^n)\label{eq:Q}\\
  \tQ &:= (x_1 + (t^n + \tau - \tilde t)(\bar c + \bar v_1) \cos\theta,\; x_2 +
  (t^n + \tau - \tilde t)(\bar c + \bar v_2) \sin \theta,\; \tilde t)\label{eq:tQ}
\end{align}
$Q_o$ is the centre of the sonic circle at time $t=t^n$, $\tQ_0$
denotes a point on the inner bicharacteristic connecting $P$ and
$Q_0$, $Q$ is a point on the perimeter of the sonic circle at time
$t=t^n$ and $\tQ$ denotes a point on the mantle of the sonic cone at
an arbitrary time $\tilde t\in[t^n,t^n+\tau]$.

After some tedious calculations, see
e.g. \cite{LukacovaNoelleKraft2007}, we get the evolution operators
for the SWE:
\begin{align}
\label{eq:exacth}
  \begin{split}
  h(P) &= \avgint h(Q) - \frac{\bar c}{g} (v_1(Q)
  \cos \theta + v_2(Q) \sin \theta)\; d\theta \\
  &\quad + \frac{\bar c}{2\pi} \tint \ipi (b_{x_1}(\tQ)\cos \theta
  + b_{x_2}(\tQ) \sin\theta)\; d\theta d\tilde t \\
  &\quad -\fpi \tint \frac{1}{t + \tau - \tilde t} \ipi \frac{\bar c}{g}
  (v_1(\tQ) \cos\theta + v_2(\tQ) \sin\theta) \; d\theta d\tilde t
  \end{split}\displaybreak[0] \\
  \label{eq:exactu}
  \begin{split}
    v_1(P) &= \frac{1}{2} v_1(Q_0) + \avgint -\frac{g}{\bar c} h(Q) \cos
    \theta + v_1(Q)\cos^2\theta + v_2(Q) \sin \theta \cos \theta \;
    d\theta \\
    &\quad - \frac{g}{2} \tint h_{x_1}(\tQ_0)+b_{x_1}(\tQ_0) \;
    d\tilde t\\
    &\quad - \frac{g}{2\pi}\tint \ipi (b_{x_1}(\tQ)\cos^2 \theta
    + b_{x_2}(\tQ) \sin\theta\cos\theta)\; d\theta d\tilde t   \\
    &\quad + \fpi \tint \frac{1}{t + \tau - \tilde t} \ipi (v_1(\tQ)
    \cos 2\theta + v_2(\tQ) \sin 2\theta) \; d\theta d\tilde t
  \end{split}\displaybreak[0] \\
  \label{eq:exactv}
  \begin{split}
    v_2(P) &= \frac{1}{2} v_2(Q_0) + \avgint -\frac{g}{\bar c} h(Q) \sin
    \theta + v_1(Q)\sin\theta \cos\theta+ v_2(Q) \sin^2 \theta \;d\theta \\
    &\quad - \frac{g}{2} \tint h_{x_2}(\tQ_0) + b_{x_2}(\tQ_0) \;
    d\tilde t \\
    &\quad - \frac{g}{2\pi}\tint \ipi (b_{x_1}(\tQ)\cos \theta\sin\theta
    + b_{x_2}(\tQ) \sin^2\theta)\; d\theta d\tilde t  \\
    &\quad + \fpi \tint \frac{1}{t + \tau - \tilde t} \ipi (v_1(\tQ)
    \sin 2\theta + v_2(\tQ) \cos 2\theta) \; d\theta d\tilde t.
  \end{split}
\end{align}
For efficient computations these operators have to be simplified. In
\cite{LukacovaNoelleKraft2007}, the authors follow
\cite{LukacovaMortonWarnecke2004} and present approximations of
(\ref{eq:exacth})~--~(\ref{eq:exactv}) that provide exact solutions of some
one-dimensional Riemann problems. For piecewise constant data, these
approximations read
\begin{align}
\label{eq:foh}
  \begin{split}
  h(P) &=  \avgint \left[ \htot(Q) - \frac{\bar c}{g} \left(v_1(Q)
  \sgn(\cos \theta) + v_2(Q) \sgn(\sin \theta)\right)\right]\; d\theta \\
  &\quad - b(P) + \frac{\tau}{2\pi} \ipi (\bar v_1 b_{x_1}(Q)
  + \bar v_2 b_{x_2}(Q))\; d\theta,
  \end{split}\displaybreak[0] \\
  \label{eq:fou}
  \begin{split}
    v_1(P) &= \avgint \left[-\frac{g}{\bar c} \htot(Q) \sgn( \cos
    \theta)\right.\\ &\quad \phantom{\avgint} \quad \left.+ v_1(Q) 
    \left(\cos^2\theta+\frac12\right)
    + v_2(Q) \sin \theta \cos \theta \right]\; d\theta,
  \end{split}
  \displaybreak[0] \\
  \label{eq:fov}
  \begin{split}
    v_2(P) &= \avgint \left[-\frac{g}{\bar c} \htot(Q)\sgn( \sin
    \theta)\right.\\ &\quad \phantom{\avgint} \quad \left. + v_1(Q) 
    \sin \theta \cos \theta
    + v_2(Q) \left(\sin^2\theta+\frac12\right) \right]\; d\theta.
  \end{split}
\end{align}
The corresponding operators for piecewise (bi-)linear data are given as
\begin{align}
  \label{eq:soh}
  \begin{split}
  h(P) &= \htot(Q_0)(1-\frac{\pi}{2}) -b(P) + \frac14 \ipi \htot(Q)
  \;d\theta\\
  &\quad - \frac{\bar c}{g\pi} \ipi(v_1(Q)  \cos \theta + v_2(Q) \sin \theta)\; d\theta \\
  &\quad + \frac{\tau}{2\pi} \ipi (\bar v_1 b_{x_1}(Q)  + \bar v_2 b_{x_2}(Q))\; d\theta,
  \end{split}\displaybreak[0] \\
  \label{eq:sou}
  \begin{split}
    v_1(P) &= v_1(Q_0)(1-\frac{\pi}{4}) + \frac{g}{\bar c \pi}\ipi  \htot(Q) \cos
    \theta\;d\theta \\
    &\quad +\frac{1}{4}\ipi\left[ v_1(Q)(1+ 3\cos^2\theta) + 3v_2(Q) \sin \theta
      \cos \theta \right] \; d\theta,
  \end{split}\displaybreak[0] \\
  \begin{split}
  \label{eq:sov}
    v_2(P) &= v_2(Q_0)(1-\frac{\pi}{4}) + \frac{g}{\bar c \pi}\ipi  \htot(Q) \sin
    \theta\;d\theta \\
    &\quad +\frac{1}{4}\ipi\left[ 3v_1(Q) \sin \theta
      \cos \theta + v_2(Q)(1+ 3\sin^2\theta) \right] \; d\theta.
  \end{split}
\end{align}
We introduce the operator $\vE(\vW)$ as a shorthand for evaluating
these evolution operators at all quadrature points $\vx_k$ for any given
numerical data $\vW$ defined analogously to (\ref{eq:globalu}).  We
will refer to the operators for piecewise constant data
(\ref{eq:foh})--~(\ref{eq:fov}) as $\ec(P)$ and $\eb(P)$ will denote
the operators for piecewise bilinear data
(\ref{eq:soh})--~(\ref{eq:sov}).

In our schemes, these operators are evaluated at the quadrature points
$\vx_k$ of the finite volume update defined in (\ref{eq:edgeflux}). Thus
all data contributing to the evolved values is derived from the cell
values next to the quadrature point. We therefore define the {\em
  stencil $S_k$} of a quadrature point $\vx_k$ as
\begin{equation}
  \label{eq:evostencil}
  S_k := \{C_i| \vx_k \in \partial C_i\}.
\end{equation}
An example of the intersection of the cone with grid cells and the
resulting stencil is shown in Fig.~\ref{fig:intersect}. 
\begin{figure}
  \centering
  \hfill
  \includegraphics{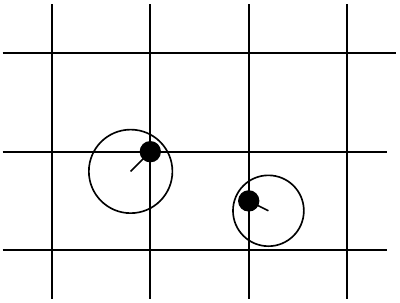} \hfill
  \includegraphics{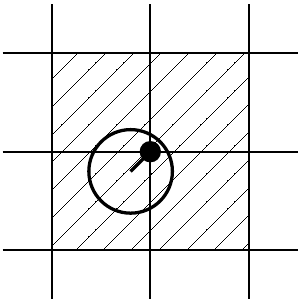} \hspace*{\fill}
  \caption{Left: Intersection of the sonic cone at quadrature points
    with grid cells. Right: Stencil of a quadrature point}
  \label{fig:intersect}
\end{figure}
The suitable average value $\bar \vw_k$ used in (\ref{eq:lSWE}) is
chosen as
\begin{equation}
  \label{eq:avgw}
  \bar \vw_k = \frac{1}{|S_k|} \sum_{i:C_i \in S_k} \vw_i.
\end{equation}
We also tried a local Lax-Friedrichs update at the prediction points
to get a better linearisation. The numerical results were almost
exactly the same, so we chose the averaging procedure (\ref{eq:avgw})
for our computations. 

\subsection{Numerical Representation of the Bottom Topography}
\label{sec:numbot}

For finite volume schemes, the numerical representation of the bottom
topography plays a crucial role in well-balancing as well as
positivity of the scheme. In \cite{AudusseBouchutBristeauEtAl2004}
Audusse \etal{}  use cell averages of the bottom for the
computation of the free surface and reconstruct the free surface and
the water height. The reconstruction of the bottom then results as the
difference between slopes of $\htot$ and $h$. In
\cite{KurganovPetrova2007} Kurganov and Petrova propose to use a
piecewise linear approximation of $b$ instead of $b$ itself by taking
the values of $b$ at cell corners. The cell average of $b$ is then
computed as the average of the corner values.

For the FVEG schemes it is necessary to define some value of $b$ not
only for cell averages and the reconstructed slopes, but also at the
quadrature points where the evolution operators are evaluated.  There
is some freedom in doing this, as the source term discretisation
(\ref{eq:dsource}) respects the well-balancing property independently
of the reconstructed slopes of the bottom topography. As the evolution
operators for the water height compute the free surface first and
derive the actual water height via $h(P) = H(P) - b(P)$, the only
necessary condition for $b(P)$ is $b(P) \leq H(P)$.

In this work, we will define the cell averages of $b$ as in
(\ref{eq:ca}). For the quadrature points on cell corners, we set
\begin{equation}
  \label{eq:avgb}
  b_k :=  \frac{1}{|S_k|} \sum_{i:C_i \in S_k} b_i \approx b(\vx_k).
\end{equation}
The values of $b_k$ at the centres of each edge are linearly
interpolated from the neighbouring corners. While the latter condition
has been derived in \cite{BollermannLukacovaNoelle2009} to ensure
well-balancing on adaptive grids, the formula for the corner points
will turn out to be helpful for the dry bed case.

\subsection{A Multidimensional Entropy Fix for the FVEG scheme}
\label{sec:entropy}

It is well known that the weak solution of a Riemann problem for
conservation laws is not always unique, and an entropy condition is
needed to single out the physically correct solution.  This has its
correspondence on the discrete level, where conservative numerical
schemes may converge to entropy-violating solutions. This notorious
difficulty seems to appear only near sonic rarefaction waves, where
the flow changes from subcritical to supercritical velocity
\cite{Roe1992}. Various researchers have proposed so-called
``entropy-fixes'' for numerical schemes. In particular, we would like
to mention Harten's and Hyman's entropy fix for the the Roe solver
\cite{HartenHyman1983} (see also the discussion in
\cite{LeVeque2002}).
 
\begin{figure}
  \centering
  \hspace*{\fill}
  \includegraphics{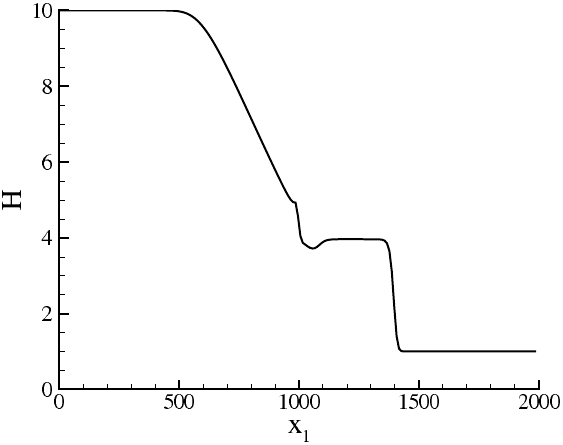}
  \hfill
  \includegraphics{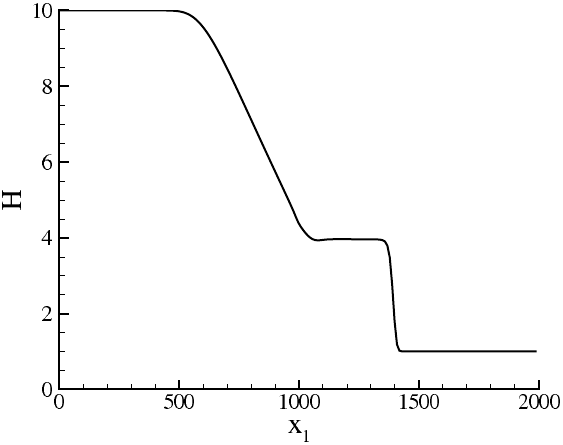}
  \hspace*{\fill}
  \caption{1D dam break problem, solved with first order FVEG
    method. Left: solution without entropy fix. Right: solution with
    entropy fix from \cite{LukacovaTadmor2009}}
  \label{fig:ent}
\end{figure}


The FVEG schemes considered here make no exception, and may compute
entropy violating solutions, see the left picture in
Fig.~\ref{fig:ent}. As is well known for classical finite volume
methods, this effect is less visible (though still there) for second
order schemes \cite{Roe1992}.  In order to make our point clear, we
therefore focus on first order computations for the rest of this
section.

In \cite{LukacovaTadmor2009}, \luka{} and Tadmor proposed an entropy
conservative variant for rarefaction waves computed by certain Riemann
solvers, see also \cite{Tadmor2003}. They applied this technique
successfully to the finite volume corrector step of the FVEG
scheme. They derived just the right amount of viscosity that one
should add to the scheme to fulfil the entropy
equality. Fig.~\ref{fig:ent} clearly shows the effectiveness of the
scheme: While the standard FVEG scheme produces an entropy violating
shock, the entropy conservative scheme clearly reproduces the correct
rarefaction wave. Nevertheless, the scheme from
\cite{LukacovaTadmor2009} does not appear perfectly suitable for our
needs.  First, the proposed fix requires the characteristic
decomposition of the jump of the conserved quantities across an edge.
As the decomposition is not needed for the FVEG schemes, this is an
undesired computational extra cost. In the context of dry boundaries,
we should also mention that the decomposition matrix becomes very ill
conditioned when $h \to 0$. The second point is that the scheme from
\cite{LukacovaTadmor2009} has been developed for the one-dimensional
case. Although it can be applied dimension wise, this approach
somewhat spoils the multidimensional spirit of the FVEG methods.

\begin{figure}
  \centering
  \hspace*{\fill}
  \includegraphics{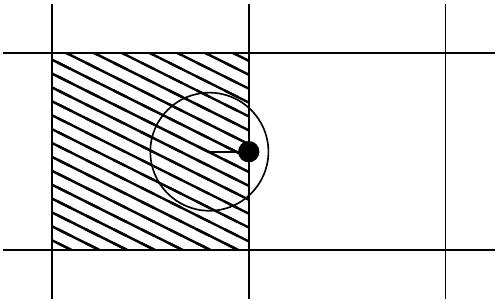}
  \hfill
  \includegraphics{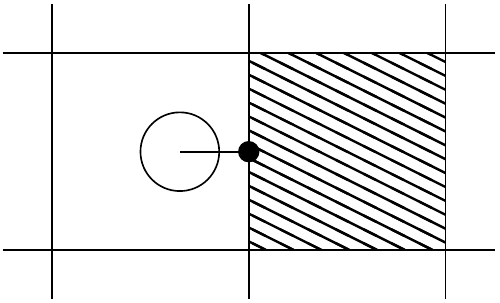}
  \hspace*{\fill}
  \caption{Position of sonic cones for the discrete sonic rarefaction
    with subsonic $\vu_l$ and supersonic $\vu_r$. Left: cone for $v_l
    < c_l$. Right: cone for $v_r>c_r$}
  \label{fig:rf}
\end{figure}

We therefore propose a new approach to solve the entropy problem. It
is not based on a flux correction, but on the correct evaluation of
the EG operators. To motivate our solution, we take a closer look on a
discrete one-dimensional Riemann problem that should result in a
transonic rarefaction, i.e. the flow is subsonic in upwind direction
and supersonic in downwind direction. Thus let us assume we have two
adjacent cells $C_l$ and $C_r$ with cell averaged data
\begin{equation}
  \label{eq:ulur}
  \vu_l = (h_l, v_l, 0),\; v_l < c_l \text{ and } 
  \vu_r = (h_r, v_r, 0),\; v_r > c_r.
\end{equation}
Here $c$ is the speed of sound defined in (\ref{eq:sos}). To evaluate
the evolution operators, we start with the sonic cones defined in
(\ref{eq:Q0})~--~(\ref{eq:tQ}). For simplicity, we limit ourselves to
the quadrature point at the centre of the edge. The sonic cones
resulting from $\vu_l$ and $\vu_r$ are sketched in
Fig.~\ref{fig:rf}. We can see that the cone resulting from $\vu_r$ is
located completely in $C_l$. Depending on the exact values of
$\vu_{l,r}$, this can also be the case for the sonic cone resulting
from the averaging procedure (\ref{eq:avgw}). In other words: We use
an evolution operator resulting from a supersonic linearisation in a
regime that is subsonic. At least for the first order operators
(\ref{eq:foh})~--~(\ref{eq:fov}), this means that the predictor step
exactly reproduces $\vu_l$, which is then used for the flux
evaluation. This corresponds to the generalised upwind method which is
known to compute entropy violating solutions in some situations,
cf. e.g. \cite{LeVeque2002}.

\begin{figure}
  \centering
  \hspace*{\fill}
  \includegraphics{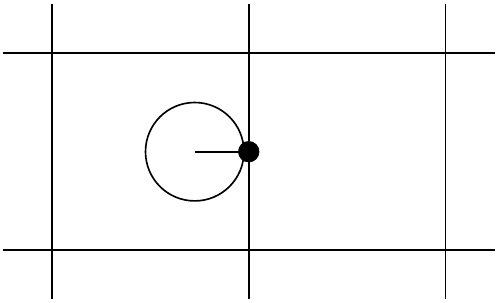}\hfill
  \includegraphics{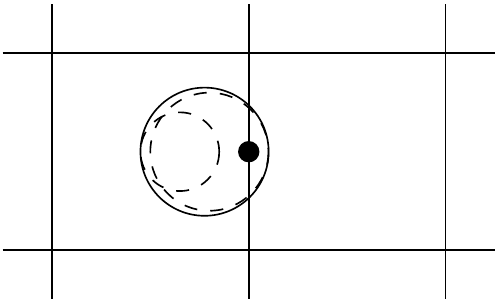}
  \hspace*{\fill}
  \caption{Position of sonic cones for the discrete sonic rarefaction
    with subsonic $\vu_l$ and supersonic $\vu_r$. Left: cone for
    averaged value $\bar \vu$. Right: superscribed cone.}
  \label{fig:rf_new}
\end{figure}

Thus the core of the problems seems to be the wrong domain of
dependence for the predictor step. In case of a sonic rarefaction, the
sonic cone should always include both regions, the subsonic as well as
the supersonic one. As this is not guaranteed, we modify our method by
extending the sonic cone if necessary. In a transonic situation we drop
the sonic circle resulting from the averaging procedure
(\ref{eq:avgw}). Instead, we use a circle which comprehends all the
circles defined by the cell averaged values in the corresponding
stencils, see Fig.~\ref{fig:rf_new} for an illustration. The exact
formulation used for our schemes is as follows. Given two circles with
midpoints $\vx_i$ and radii $r_i$, $i=1,2$, we compute the new circle
as
\begin{align*}
  r &= \frac{r_1 + r_2 + d}{2} \\
  \vx &= \vx_1 + (r - r_1)\frac{\vx_2 - \vx_1}{d}
\end{align*}
with $d=\|\vx_2 - \vx_1\|_2$. However, if one circle comprehends the
other one, we just choose the bigger circle as our new one. If the
stencil of the evolution point consists of four cells, we apply the
same formula for the neighbours on the two diagonals first and then
again for the resulting circles.

\begin{figure}
  \centering
  \hspace*{\fill}
  \includegraphics{dam_LT.pdf}
  \hspace*{\fill}
  \includegraphics{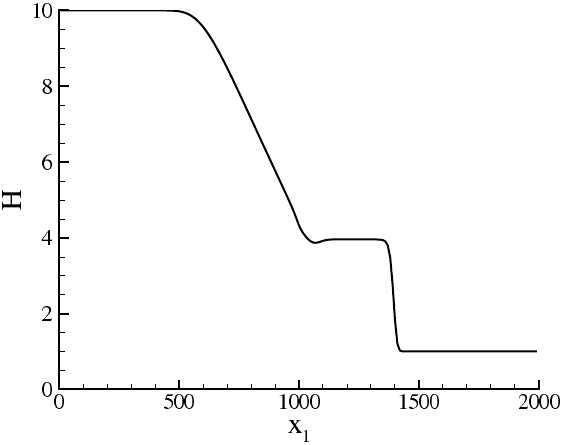}
  \hspace*{\fill}
  \caption{1D dam break problem, solved with first order FVEG
    method. Left: solution with entropy fix from
    \cite{LukacovaTadmor2009} Right: solution with new entropy fix}
  \label{fig:ent_comp}
\end{figure}


In Fig.~\ref{fig:ent_comp} we compare the results of the entropy
stable scheme from \cite{LukacovaTadmor2009} and our new
approach. They both clearly solve the entropy problem, with a slight
advantage of the scheme by \luka{} and Tadmor. However, as we
previously pointed out, the new method is more efficient. Compared
to the scheme without any entropy fix, the new one took only about 2\%
extra time, whereas the approach from \cite{LukacovaTadmor2009} needs about
8\% more computational time. As the new scheme is also suitable for
computations including dry areas, we chose it for all the numerical
experiments in Section~\ref{sec:res}.

\section{Dry Bed Modifications}
\label{sec:dry}

To extend our schemes to computations including dry beds, we have to
guarantee two properties: the positivity of the water height, and the
well-balancing under the presence of dry areas. In literature, this is
mainly achieved by two basic ingredients: a positivity preserving
reconstruction and an additional time step constraint. Examples can be
found in \cite{AudusseBouchutBristeauEtAl2004, KurganovPetrova2007,
  RicchiutoBollermann2009}.

For the FVEG schemes, these measures fall short of the aims. The
additional predictor step via the evolution operator prevents a direct
proof of a positivity property. One reason for this is the extended
stencil: the flux over an edge is computed using more cells than the
direct neighbours (see Fig.~\ref{fig:quad} and \ref{fig:intersect}).
Another problem is the complex evaluation of the operators and with
them the flux which makes an analysis of the positivity at least
challenging if not impossible.

Regarding the well-balancing, a sophisticated reconstruction is not
enough. From (\ref{eq:dsource}) it is obvious that the reconstruction
does not directly affect the balancing of flux and source terms. The
core of the problem is that the lake at rest described in
(\ref{eq:lar}) changes to
\begin{equation}
  \label{eq:lardry}
     \vv = (0,0)^T \text{ and } \htot(\vec x) =
     \begin{cases}
       H_0 & \text{$h(\vx) > 0$}\\
       b(\vx) & \text{else}
     \end{cases}
\end{equation}
if dry areas are included. Thus for our schemes, we have to find
evolution values for the water and bottom height that can handle
properly the occurrence of this situation, i.e. which avoid the
generation of spurious waves at the shoreline.

In this section, we will present an alternative approach to ensure the
positivity of the water height as well as modifications of the finite
volume update and the evolution operator to ensure the well-balancing
property. We make sure that the changes do not affect the scheme away
from dry regions.


\subsection{A General Positivity Preserving FV Update}
\label{sec:posFV}

For the derivation of a positivity preserving scheme, we study
the first component of the finite volume update
(\ref{eq:fullydiscrete})
\begin{equation}
\label{eq:fvh}
  h_i^{n+1} = h_i^n - \frac{\dt}{\dx} \sum_{E, E \subset \partial C_i}
  \hflux_E
\end{equation}
where
\begin{equation}
\label{eq:fluxh}
\hflux_E :=  \flux_E^h
\end{equation}
is the first component of the flux vector $\flux_E$. In the following,
we will modify the flux $\hflux_E$ in order to guarantee positive water
height. The technique presented here is applicable to arbitrary finite
volume fluxes, and the FVEG flux given in (\ref{eq:edgeflux}) below is
only a
special case within this framework.

The basic idea of our method is to cut off the outgoing fluxes as soon as
all water which has been contained in the cell at the beginning of the
time step has left the cell via the outgoing fluxes. We will call this
time the {\em draining} time. For convenience, we will later rewrite
this as a reduced time step $\dt_E$ on these edges, but in fact the 
finite volume update will always advance the solution by one 
global time step $\dt$.

Thus our first step is to separate the fluxes contributing to the
outflow off a cell $C$ from those contributing to the inflow by setting
\begin{align}
  \label{eq:fin}
  \hflux^+_E &:= \max\left\{\hflux_E, 0\right\} \\
  \hflux^-_E &:= \min\left\{\hflux_E, 0\right\}.
\end{align} 
This allows us to rewrite the update (\ref{eq:fvh}) as
\begin{equation}
  \label{eq:hinout}
    h_i^{n+1} = h_i^n \;-\; \underbrace{\frac{\dt}{\dx} \sum_{E, E \subset
     \partial C_i} \hflux_E^+}_{\text{outflow}}  \;-\; \underbrace{\frac{\dt}{\dx}
      \sum_{E, E \subset \partial C_i} \hflux_E^-}_{\text{inflow}}.
\end{equation}
Now we introduce the {\em draining} time by
\begin{equation}
  \label{eq:dtdrain}
  \dtd := \frac{\dx \, h_i^n}{\sum_{E, E \subset \partial C_i}  \hflux_E^+ }.
\end{equation}
Once again, at time $t^n+\dtd$ all water which was originally contained
in cell $C_i$ has flown out, so
\begin{equation}
  \label{eq:hdrain}
    h_i^n \;-\; \underbrace{\frac{\dtd}{\dx} \sum_{E, E \subset \partial
      C_i} \hflux_E^+}_{\text{outflow}} = 0.
\end{equation}
Suppose now that $\dtd<\dt$. For $t\in[t^n+\dtd\,,\,t^{n+1}]$, no more
water can leave the cell, at least not water which was originally
contained in cell $C_i$. Therefore we assume that there is no outgoing
flux for times beyond the draining time, and introduce the cut-off
flux $\widetilde\hflux_E^+$ by
\begin{equation}
  \label{eq:fdrain}
  \widetilde\hflux_E^+(t) := \begin{cases} \hflux_E^+(\vu^n) & \text{ for} \;\;
  t^n\;\leq\;t\;<\;t^n+\dtd
   \\ 
  \quad 0 & \text{ for} \;\; t\;>\;t^n+\dtd \end{cases}
\end{equation}
Now we integrate the draining flux in time and obtain the cut-off (or draining)
finite volume flux
\begin{equation}
  \label{eq:hfluxdrain}
  \dt \, \hfluxpd(\vu^n)  := \int_{t^n}^{t^{n+1}} \widetilde\hflux_E^+(t) dt
  = \int_{t^n}^{t^{n}+\dtd} \hflux_E^+(\vu^n) dt
  = \dtd \hflux_E^+(\vu^n).
\end{equation}
Before introducing the positivity-preserving modification of the
finite volume scheme \eqref{eq:fullydiscrete}, we rewrite the cut-off in the
flux as a local cut-off in the time step. This cut-off time step
is defined for each edge $E$ and takes into account
the upwind cell $C^-(E)$:
\begin{equation}
  \label{eq:dte}
  \dt_E := \min\left(\dt, \dt_{C^-(E),\text{drain}}\right)
\end{equation}
Now we replace the finite volume flux $\hflux$ in
\eqref{eq:fullydiscrete} by the cut-off finite volume flux
defined in \eqref{eq:hfluxdrain}.
Using \eqref{eq:dte}, this leads to the modified general
update (\ref{eq:fullydiscrete})
\begin{equation}
  \label{eq:fvlts}
  \vu_i^{n+1} = \vu_i^n -  \frac{1}{\Delta x}  \sum_{E, E
    \subset \partial C_i} \dt_E\flux_E.
\end{equation}

\begin{theorem}
  Assume we have a conservative finite volume scheme for the solution
  of the shallow water equations that can be written in the form
  (\ref{eq:fullydiscrete}). Then the modified finite volume scheme
  \eqref{eq:fvlts} with locally cut-off flux \eqref{eq:hfluxdrain})
  (respectively locally cut-off time step \eqref{eq:dte})
  is positivity preserving.
\end{theorem}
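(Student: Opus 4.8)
The plan is to show that each water height $h_i^{n+1}$ computed by the modified scheme \eqref{eq:fvlts} remains nonnegative, given that all $h_i^n \geq 0$. The key observation is that the cut-off construction treats outflow and inflow asymmetrically: inflow is never reduced (it can only help positivity), while outflow is capped exactly at the point where the cell would empty. So I would begin by isolating the first (water-height) component of \eqref{eq:fvlts} and splitting the edge fluxes into their positive and negative parts as in \eqref{eq:fin}, exactly mirroring the decomposition in \eqref{eq:hinout}, but now with the edge-local time steps $\dt_E$ in place of the global $\dt$.

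The central step is to bound the total outflow from cell $C_i$. For each edge $E \subset \partial C_i$ that is an outflow edge for $C_i$, the cell $C_i$ is the upwind cell $C^-(E)$, so by \eqref{eq:dte} the relevant time step is $\dt_E = \min(\dt, \dtd)$. I would show that the outgoing contribution summed over these edges is exactly the cut-off flux integral \eqref{eq:hfluxdrain}, and then use the defining property \eqref{eq:hdrain} of the draining time: when $\dtd < \dt$, the factor $\dt_E = \dtd$ makes the total outflow equal to precisely $h_i^n \dx$, i.e. it drains the cell to exactly zero and no further. When $\dtd \geq \dt$, we have $\dt_E = \dt$ and the original (unmodified) update already kept $h_i^{n+1} \geq 0$ by the definition of $\dtd$ in \eqref{eq:dtdrain}. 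Either way,
\begin{equation*}
  \frac{1}{\dx} \sum_{\substack{E \subset \partial C_i \\ \text{outflow}}} \dt_E\, \hflux_E^+ \;\leq\; h_i^n,
\end{equation*}
so subtracting the outflow alone cannot drive $h_i^{n+1}$ negative. Adding back the inflow, whose contribution $-\tfrac{1}{\dx}\sum_E \dt_E \hflux_E^-$ is nonnegative, only increases $h_i^{n+1}$, which closes the argument.

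The point requiring care — and what I expect to be the main obstacle — is the consistent treatment of the upwind cell in \eqref{eq:dte}. The draining-time bound is defined per cell in \eqref{eq:dtdrain}, but the time step is applied per edge, and each edge carries exactly one upwind cell $C^-(E)$. I would need to verify that for a fixed cell $C_i$, the edges on which $C_i$ acts as the upwind cell are precisely the edges carrying the outflow $\hflux_E^+$ out of $C_i$, so that the single draining time $\dtd$ simultaneously governs all of $C_i$'s outflow edges. A subtlety here is that reducing $\dt_E$ on an outflow edge of $C_i$ also reduces the inflow into the neighbouring downwind cell across that same edge; but since inflow reduction can only shrink the amount of water a cell receives and we are bounding outflow only, this does not threaten positivity of any cell — each cell's own outflow is still capped by its own draining time. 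Once this edge/cell bookkeeping is pinned down, the positivity estimate follows directly from \eqref{eq:hdrain} and the nonnegativity of the inflow term, with no further calculation needed.
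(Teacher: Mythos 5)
Your proposal is correct and follows essentially the same route as the paper's own proof: both isolate the height component, discard the (nonnegative) inflow contribution, use that $C_i$ is the upwind cell $C^-(E)$ on exactly its outflow edges, and conclude from the definition \eqref{eq:dtdrain} of the draining time that the accumulated outflow is at most $h_i^n$. Your case distinction $\dtd < \dt$ versus $\dtd \geq \dt$ is merely an unrolled form of the paper's single inequality $\dt_E \leq \dt_{C^-(E),\text{drain}}$, so there is no substantive difference between the two arguments.
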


\begin{proof}
From the first component of \eqref{eq:fvlts}, combined with definitions \eqref{eq:dte}
of the cut-off time step $\dt_E$ and \eqref{eq:dtdrain} of the draining time $\dtd$,
the water height at the new time step can be bounded as follows:
\begin{align*}
  \label{eq:hinout}
    h_i^{n+1} 
    &\;=\;\;\; h_i^n 
     \;\;-\; \sum_{E, E \subset \partial C_i} 
       \frac{\dt_E}{\dx} \; \left(\hflux_E^+ + \hflux_E^-\right) \\
    &\;\geq\;\;\; h_i^n 
     \;\;-\; \sum_{E, E \subset\partial C_i}
       \frac{\dt_E}{\dx} \; \hflux_E^+ \\
    &\;\geq\;\;\; h_i^n 
     \;\;-\; \sum_{E, E \subset\partial C_i}
       \frac{\dt_{C^-(E),\text{drain}}}{\dx} \hflux_E^+ \\
    &\;=\;\;\; h_i^n 
     \;\;-\; \frac{\dt_{C_i,\text{drain}}}{\dx} 
       \sum_{E, E \subset\partial C_i} \hflux_E^+ \\
    &\;=\;\;\; 0
\end{align*}

\vspace*{-3ex}

\qedhere
\end{proof}

\begin{remark}

The time step $\dt_E$ used in the new finite volume update \eqref{eq:fvlts} and
defined in \eqref{eq:dte} might seem to be local for each edge. We would like to
stress, however, that the finite volume scheme  \eqref{eq:fvlts} still advances the
solution by one and the same global time step $\dt$. The apparent contradiction
is resolved by considering equation \eqref{eq:hfluxdrain}: the time-integral
of the flux is still over the global interval $[t^n,t^{n+1}]$. However, the
flux $\hfluxpd$ is cut-off in the presence of vacuum, see \eqref{eq:fdrain}.

\end{remark}

\subsection{Well-balancing at the Shoreline: the Finite Volume Update}
\label{sec:wbsource}

In the derivation of the positivity preserving finite volume update,
we so far neglected the source term. Its introduction to the new
scheme (\ref{eq:fvlts}) rises the question which time step should be
used for the source term. To maintain the well-balancing, the source
term and the gravity driven parts of the flux must be multiplied
with the same time step. This is in contradiction to the definition of
$\dt_E$, which may change for different edges of the same cell. On the
other hand, the reduced time step is not necessary for the momentum
equations. We therefore shift the gravity driven components of $\flux$
into the source term, i.e. we define
\begin{equation}
  \label{eq:newflux}
  \flus(\vu) := 
  \begin{pmatrix}
    hv_1 & hv_2 \\
    hv_1^2 & hv_1v_2 \\
    hv_1v_2 & hv_2^2 
  \end{pmatrix} \text{ and }
  \sours(\vu,\vx) :=  gh
  \begin{pmatrix}
    0 \\ \frac{\partial \htot(\vec x)}{\partial x_1} \\ 
    \frac{\partial \htot(\vec x)}{\partial x_2}
  \end{pmatrix}.
\end{equation}
By replacing $\flux$ with $\flus$ in (\ref{eq:edgeflux}) and changing
(\ref{eq:dsource}) to
\begin{equation}
  \label{eq:discsourcewb}
  \sours_i := g  \sum_{j=1}^3 \alpha_j
  \begin{pmatrix}
      0 \\ \frac12(\hat h_j^r + \hat h_j^l)(\htot^r_j-\htot_j^l) \\
      \frac12(\hat h_j^t + \hat h_j^b)(\htot_j^t-\htot_j^b) 
    \end{pmatrix}
\end{equation}
we can rewrite (\ref{eq:fullydiscrete}) as
\begin{equation}
  \label{eq:newfvupd}
  \vu_i^{n+1} = \vu_i^n -  \frac{1}{\Delta x} \left[\left( \sum_{E, E
    \subset \partial C_i} \dt_E \flus_E\right) + \dt\sours_i \right].  
\end{equation}
This formulation ensures the well-balancing of the scheme even in the
case of modified local time steps. Away from the shoreline we have
$\dt_E = \dt \; \forall E$ and (\ref{eq:newfvupd}) equals the
original update (\ref{eq:fullydiscrete}).

\begin{remark}
  The rearrangement of the advective and gravity driven parts of the
  equations in (\ref{eq:newflux}) is independent of the scheme. Thus
  in principle, every finite volume scheme can be reformulated as in
  (\ref{eq:newfvupd}). To obtain a well-balanced scheme, we only need
  a discretisation of $\sours$ analogously to (\ref{eq:discsourcewb})
  that preserves the lake at rest solution in the presence of dry
  boundaries.
\end{remark}

\newpage

\subsection{Well-balancing at the Shoreline: the Evolution Operator}
\label{sec:wbeg}

The lake at rest situation with dry beds described in
(\ref{eq:lardry}) is only preserved if the numerical flux and source
terms in (\ref{eq:discsourcewb}) are exactly balanced, or equivalently
if the evolution operators reproduce the lake at rest. This is not
necessarily the case if the stencil of a quadrature point contains dry
cells, as is demonstrated in Fig.~\ref{fig:lar}.  If $b_i > H_o$ for a
cell in the stencil, the resulting bottom value from (\ref{eq:avgb})
can be higher than the free surface in the wet cells. Using the
approximate evolution operators (\ref{eq:foh}) and (\ref{eq:soh}), it
is easy to see that in the lake at rest case the evolved water height
is also positive, leading to an even higher free surface at the
quadrature point.
\begin{figure}
  \centering
   \includegraphics{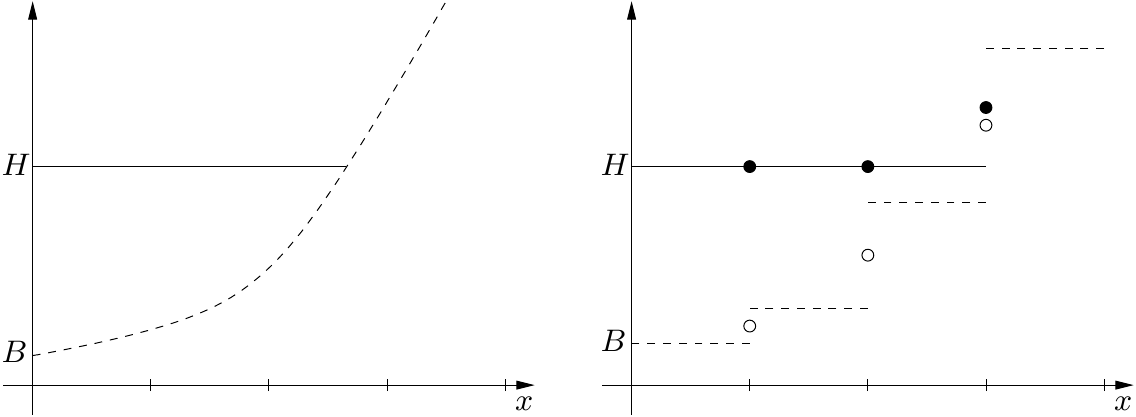}
  \caption{Lake at rest with dry boundaries. Solid line: free surface,
    dashed line: bottom topography, filled circles: $H$ at evolution
    points, empty circles: $B$ at evolution points.  Left: Real
    situation. Right: Numerical Representation with evolution
    values.}
  \label{fig:lar}
\end{figure}
In this case the combined flux and source term $\sours$ from
(\ref{eq:discsourcewb}) does not vanish anymore and introduces
unphysical waves starting from the dry boundary.

To avoid the creation of these waves, we modify the data used for the
predictor step at the interface. First, we replace the stencil $S_k$
by $S^*_k$ defined as
\begin{equation}
  \label{eq:drystencil}
  S^*_k := \{C_i| C_i \in S_k \wedge h_i > 0\}
\end{equation}
which allows us to determine the maximal free surface level at $\vx_k$
as
\begin{equation}
  \label{eq:fsxk}
  \bar H_k = \max_{S^*_k}(H_i).
\end{equation}
Now in (\ref{eq:avgw}) and for the evaluation of the evolution
operators we set
\begin{equation}
\label{eq:dryvals}
(H, b, \vv)_i = (\bar H_k, \bar H_k, 0) \quad \text{ if } C_i \notin
S^*_k \wedge b_i > \bar H_k.
\end{equation}
In all other cases, we leave the values unchanged. The modification,
which is illustrated in Fig.~\ref{fig:larmod}, ensures that the free
surface is correctly represented in the source term computation
(\ref{eq:discsourcewb}). We also avoid an unphysical flooding of
mounting slopes. In \cite{BrufauGarcia-Navarro2003,
  RicchiutoBollermann2009} a similar technique is used on triangles.
\begin{figure}
  \centering
   \includegraphics{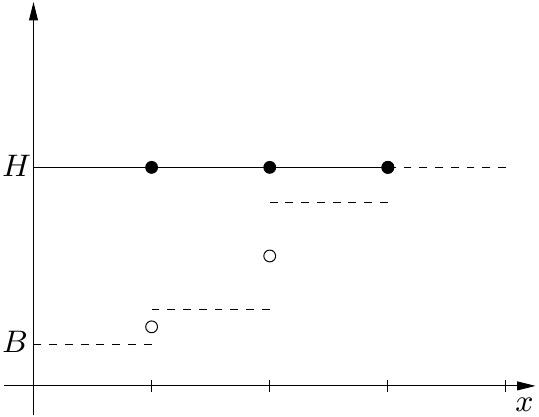}
   \caption{Lake at rest with dry boundaries, modification
     (\ref{eq:dryvals}) for computation of evolution values. Symbols
     like in Fig.~\ref{fig:lar}}
  \label{fig:larmod}
\end{figure}

Finally, even in the presence of wet, but nearly dry cells in the
stencil, the expressions for $h(P)$ in (\ref{eq:foh}) and
(\ref{eq:soh}) can become negative if $h_i$ is small in the
surrounding cells. This cannot necessarily be cured by a smaller
time step, as substantial parts of the expressions do not depend on
$\dt$. With this restriction in mind, we propose the simplest
solution: Whenever we have $h(P) < 0$, we set $h(P) = v_1(P) = v_2(P) =
0$.

\subsection{Treatment of Nearly Dry Cells}
\label{sec:drycells}

In our schemes, we consider a cell $C_i$ to be dry when $h_i <
\eps_H$, where we have chosen $\eps_h = 10^{-8}$. In dry cells we set
$h_i = u_i = v_i = 0$. A well known problem occurs when $h_i$ is close
to that value: the velocity $v = hv/h$ can become singular due to
small numerical errors in the conserved variables. This leads to very
small time steps which in the end can basically stop the
computation. This problem has been discussed e.g. in
\cite{KurganovPetrova2007, RicchiutoBollermann2009}, where different
strategies have been proposed. In \cite{KurganovPetrova2007}, Kurganov
and Petrova propose to de-singularise $v$ by multiplying it with a
certain factor $f < 1$ whenever $h$ falls below a certain threshold
$\eps_v$.  In \cite{RicchiutoBollermann2009}, the authors just set
$v=0$ whenever $h < \eps_v$.

In this work, we use a different approach. As the solution at the dry
boundary is always a rarefaction wave, the flow velocity will grow
smoothly when water floods formerly dry areas. We will therefore limit
the velocity in nearly dry regions depending on the velocity in
flooded areas. We define the reference speed 
\begin{equation}
\label{eq:uref}
v_{ref} := \max_{i: h_i > \eps_v}\|\vv\|_2
\end{equation}
where
\begin{equation}
  \label{eq:epsu}
  \eps_v = \frac{\dx}{L_{ref}}, \quad L_{ref}:= \max_{i,j}\|\vx_i-\vx_j\|_\infty.
\end{equation}
Whenever we have $h_i < \eps_v$ and $\|\vv_i\| > v_{ref}$, we
set the new velocity to
\begin{equation}
  \label{eq:unew}
  v^*_i = v_{ref}\left(2 - \frac{v_{ref}}{\|\vv_i\|}\right),
\end{equation}
such that $\|\vv\|$ is smoothly limited to a value between $v_{ref}$ and
$2v_{ref}$.  The velocity components in $C_i$ are then defined as
\begin{equation}
  \label{eq:uvnew}
  \vv_i = v^*_i  \vec d
\end{equation}
with $\vec d$ the unit vector pointing in the same direction as the
vector of discharge $(hv_1, hv_2)^T$. This approach appears us to be a
better representation of the physics of the flow, as the velocity at
the front is not necessarily vanishing.

\subsection{The FVEG Algorithm}
\label{sec:alg}

Before summarising the whole FVEG algorithm, we will spend a few words
on the reconstruction needed to evaluate the evolution operators for
piecewise linear data (\ref{eq:soh})~--~(\ref{eq:sov}). As the
operators are computed from the primitive variables $\vw$, these are a
natural choice for the reconstruction $R_{\dx}$. Thus in each cell we need
the linear function
\begin{equation}
  \label{eq:recw}
   R_{\dx}(\vw_i)(\vx)_{|C_i} := \tilde \vw_i + \nabla\vw_{i}\cdot(\vx - \vx_i) +
   (\vw_{i})_{x_1 x_2}(\vx-\vx_i)_1(\vx-\vx_i)_2.
\end{equation}
The derivatives $(\vw_i)_{x_1},(\vw_i)_{x_2}$ and $(\vw_i)_{x_1x_2}$
are computed from the slopes between cell averages, cf.
\cite{LukacovaSaibertovaWarnecke2002}. In this paper, we use the
continuous, piecewise bilinear recovery described in
\cite{LukacovaNoelleKraft2007} without any limiters. The piecewise
bilinear functions are uniquely defined by the averages at the cell
corners, which are already computed for the evaluation of the
evolution operators, see (\ref{eq:avgw}). Then the $\tilde \vw$ from
(\ref{eq:recw}) is exactly the average of the averages at the cell
corners, thus the resulting reconstruction is not necessarily
conservative.  We will therefore use the combined evolution operator
\begin{equation}
  \label{eq:combe}
  \vE(\vW) := \eb(R_{\dx}(\vW)) + \ec(\vW - \tilde \vW),
\end{equation}
where the first order correction $\ec(\vW - \tilde \vW)$ is necessary
for stability, see \cite{LukacovaMortonWarnecke2004} for an in-depth
discussion of this issue.

Although the conservative correction introduces some oscillations at
steep fronts, the piecewise bilinear reconstruction has a clear
advantage in the vicinity of dry areas. As the averaged water height
at the cell corners is non-negative via
eqs. (\ref{eq:drystencil})~--~(\ref{eq:dryvals}), the reconstruction
is also non-negative by design. In dry cells, we set
\begin{equation}
  \label{eq:recdry}
  \vw_{x_1} = \vw_{x_2} = \vw_{x_1x_2} = 0  \text{ if } h_i =0.
\end{equation}
We refer to \cite{LukacovaNoelleKraft2007,
  LukacovaSaibertovaWarnecke2002} for further details concerning the
reconstruction strategy.

The complete algorithm now reads as follows:
\begin{alg}
  \begin{enumerate}
  \item From given conservative data $\vu_i^n$ and $b_i$ at time
    $t^n$, compute the nonconservative variables $H_i^n, v_{1,i}^n,
    v^n_{2,i}$.
  \item apply the reconstruction operator $R_{\dx}$ to $H_i^n, v_{1,i}^n,
    v^n_{2,i}$ and $b_i$.
  \item compute the evolution operators
  \item evaluate the advection fluxes $\flus_E$ from
    (\ref{eq:newflux})
  \item compute the gravity driven flux and source terms $\sours_i$
    from (\ref{eq:discsourcewb})
  \item perform the finite volume update (\ref{eq:newfvupd})
  \end{enumerate}
\end{alg}
We will finish the section by a proof of the well-balancing property
of the scheme.
\begin{theorem}
  Suppose that we have a numerical solution respecting the lake at
  rest solution (\ref{eq:lardry}) with dry boundaries. Then the FVEG
  scheme (\ref{eq:newfvupd}) together with the modifications described
  in Section~\ref{sec:wbeg} preserves this state.
\end{theorem}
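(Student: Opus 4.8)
The plan is to show that under the lake\nobreakdash-at\nobreakdash-rest configuration \eqref{eq:lardry}, every ingredient of the update \eqref{eq:newfvupd} either vanishes or cancels, so that $\vu_i^{n+1} = \vu_i^n$ in every cell. I would organise the argument in three stages: first verifying that the approximate evolution operators reproduce the lake at rest, then checking that the modifications of Section~\ref{sec:wbeg} actually feed genuine lake\nobreakdash-at\nobreakdash-rest data into these operators at the shoreline, and finally assembling the update to see that the advective flux $\flus_E$ and the discretised source $\sours_i$ vanish simultaneously.

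First I would insert $\vv = 0$ and $\htot \equiv H_0$ into the piecewise\nobreakdash-constant operators \eqref{eq:foh}--\eqref{eq:fov} and the bilinear operators \eqref{eq:soh}--\eqref{eq:sov} in turn. With $v_1(Q)=v_2(Q)=0$ and $\bar v_1=\bar v_2=0$, the water\nobreakdash-height operators collapse to $h(P) = \avgint H_0\,d\theta - b(P) = H_0 - b(P)$ (in the bilinear case the weights $1-\tfrac{\pi}{2}$ and $\tfrac14\cdot 2\pi$ again combine to reproduce $H_0$), so the evolved free surface is $H(P) = H_0$. For the velocities, the only surviving contributions are the $\sgn(\cos\theta)$, $\sgn(\sin\theta)$ integrals of the constant $H_0$ in \eqref{eq:fou}--\eqref{eq:fov}, and the $\cos\theta$, $\sin\theta$ integrals in \eqref{eq:sou}--\eqref{eq:sov}; each of these integrates to zero over $[0,2\pi)$ by symmetry, giving $v_1(P)=v_2(P)=0$. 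This establishes that the operators are exact for the lake at rest whenever they are supplied with constant free surface and zero velocity.

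Next I would track what data actually reaches the operators at a quadrature point $\vx_k$ whose stencil straddles the shoreline. In the wet part of the stencil the cell values are already $(H_0,\,b_i,\,0)$ with $b_i < H_0$; for any obstructing dry cell with $b_i > \bar H_k$, the replacement \eqref{eq:dryvals} together with $\bar H_k = H_0$ from \eqref{eq:fsxk} overwrites its data by $(H_0,H_0,0)$, while a dry cell with $b_i=H_0$ already carries $H_i=H_0$. Hence every cell entering the averaged state \eqref{eq:avgw} and the operator evaluation carries free surface $H_0$ and zero velocity, so the previous stage yields $H(P)=H_0$ and $\vv(P)=0$ at $\vx_k$. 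Since the modified bottom values all satisfy $b_i \le H_0$ and at least one wet cell contributes a value strictly below $H_0$, the interpolated $b(P)$ obeys $b(P) < H_0$, so $h(P) = H_0 - b(P) > 0$ and the final safeguard ``$h(P)<0 \Rightarrow h=v_1=v_2=0$'' never activates at a wet\nobreakdash-adjacent quadrature point. I expect this to be the delicate part of the proof: one must verify that \eqref{eq:dryvals} raises the free surface of every masking dry cell to exactly $H_0$ and lowers no wet value, so that no spurious free\nobreakdash-surface gradient and no artificial triggering of the safeguard survives across the interface.

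Finally I would assemble the update. Because $\vv(P)=0$ at all quadrature points bordering a wet cell, the purely advective flux \eqref{eq:newflux} gives $\flus_E = 0$ on every such edge; because $H(P)=H_0$ at all of these points, each free\nobreakdash-surface difference $\htot^r_j-\htot^l_j$ and $\htot^t_j-\htot^b_j$ in \eqref{eq:discsourcewb} vanishes, so $\sours_i = 0$. Substituting into \eqref{eq:newfvupd} leaves $\vu_i^{n+1} = \vu_i^n$ on the wet cells. For a dry cell, the water\nobreakdash-height component of the flux vanishes with $\vv(P)=0$ on its wet\nobreakdash-adjacent edges, while the source has no first component, so $h_i$ remains $0$ and no water is spuriously created; combined with the reset of the velocity in nearly dry cells from Section~\ref{sec:drycells}, the dry state $(0,0,0)$ is preserved as well. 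Thus the lake at rest \eqref{eq:lardry} is maintained exactly, the only genuinely nontrivial bookkeeping being the shoreline cancellation of the free\nobreakdash-surface gradient guaranteed by \eqref{eq:dryvals}.
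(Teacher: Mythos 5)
Your proof is correct, and it takes a genuinely more self-contained route than the paper. The paper's own proof is a reduction to earlier results: it first observes that for $\vv=(0,0)^T$ all advective fluxes vanish, hence $\dt_E=\dt$ on every edge and the modified update (\ref{eq:newfvupd}) coincides with the original update (\ref{eq:fullydiscrete}); it then verifies---essentially as in your second stage---that the shoreline modifications (\ref{eq:drystencil})--(\ref{eq:dryvals}) and the zero reconstruction slopes feed constant free surface $H_0$ and zero velocity into the predictor; and it finishes by citing Theorems 2.1 and 3.1 of \cite{LukacovaNoelleKraft2007}, which assert well-balancing of the FVEG scheme under exactly those hypotheses. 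You replace that citation by a direct computation: you check by hand that the operators (\ref{eq:foh})--(\ref{eq:fov}) and (\ref{eq:soh})--(\ref{eq:sov}) return $H(P)=H_0$ and $\vv(P)=0$ on lake-at-rest data (the $\sgn(\cos\theta)$, $\cos\theta$, $\sin\theta$ integrals vanishing over $[0,2\pi)$), and then conclude termwise that $\flus_E=0$ and $\sours_i=0$ in (\ref{eq:newfvupd}). Your route buys three things the paper leaves implicit: (i) you never need $\dt_E=\dt$, since $\dt_E\flus_E=0$ as soon as $\flus_E=0$; (ii) you verify that the safeguard ``$h(P)<0 \Rightarrow h(P)=v_1(P)=v_2(P)=0$'' cannot fire at a wet-adjacent quadrature point (because $b(P)<H_0$ there), which is genuinely needed, since firing it would destroy the cancellation of the free-surface differences in (\ref{eq:discsourcewb}); (iii) you treat dry cells explicitly, noting that any momentum the unbalanced source may produce there is annihilated by the dry-cell reset of Section~\ref{sec:drycells}. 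What the paper's route buys is brevity and coverage of one detail you gloss over: the scheme actually evaluates the combined operator (\ref{eq:combe}), i.e.\ $\eb$ on the reconstruction plus the conservative correction $\ec(\vW-\tilde\vW)$; under lake at rest the correction data $\vW-\tilde\vW$ vanishes, so this is harmless, but strictly your first stage should be stated for (\ref{eq:combe}) rather than for the raw operators---this is precisely the bookkeeping the paper delegates to Theorem 3.1 of \cite{LukacovaNoelleKraft2007}. Finally, both your proof and the paper's rest on the same implicit assumption that dry cells adjacent to wet ones satisfy $b_i\geq H_0$ (your case analysis covers $b_i>\bar H_k$ and $b_i=H_0$ only); if a dry cell with $b_i<H_0$ bordered a wet one, (\ref{eq:dryvals}) would leave a free-surface jump in the stencil, but that configuration is not a steady state of (\ref{eq:cSWE}) in the first place, so this is a shared and defensible omission rather than a gap on your side.
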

\begin{proof}
  For the lake at rest state with $\vv = (0,0)^T$, the advective parts
  of the fluxes $\flux$ defined in (\ref{eq:watervariables}) and
  $\flus$ defined in (\ref{eq:newflux}) are all zero. Thus for all
  edges we have $\dt_E = \dt$ and the original finite volume update
  (\ref{eq:fullydiscrete}) and the modified update (\ref{eq:newfvupd})
  are the same. Then Theorem 2.1 from \cite{LukacovaNoelleKraft2007}
  states that the scheme is well-balanced provided that the predicted
  point values used for the flux evaluation also satisfy the lake at
  rest situation.

  We will now show that all data used for the reconstruction and for
  the evaluation of the predictor step satisfies the requirements of
  Theorem 3.1 from \cite{LukacovaNoelleKraft2007}. From definitions
  (\ref{eq:drystencil}) and (\ref{eq:fsxk}) we see that for all
  evolution points the averaged free surface is computed as $H_0$,
  which is the free surface level in all flooded cells. As all
  velocities are zero by (\ref{eq:lardry}) respectively
  (\ref{eq:dryvals}), the averaging also returns zero. Finally, the
  reconstruction procedure is based on the averaged point values and
  therefore in all flooded cells we have $ \vw_{x_1} = \vw_{x_2} =
  \vw_{x_1x_2} = 0$. In dry cells we have the same result by
  definition (\ref{eq:recdry}). Thus we can apply Theorem 3.1 from
  \cite{LukacovaNoelleKraft2007} and this concludes our proof.
\end{proof}

\section{Numerical Results}
\label{sec:res}

\subsection{Dam Break over Dry Bed}
\label{sec:dam}

\begin{figure}
  \centering
  \hspace*{\fill}
  \includegraphics{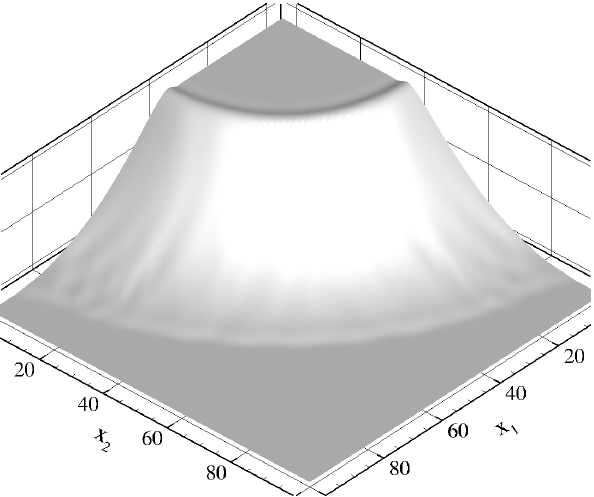} \hfill
   \includegraphics{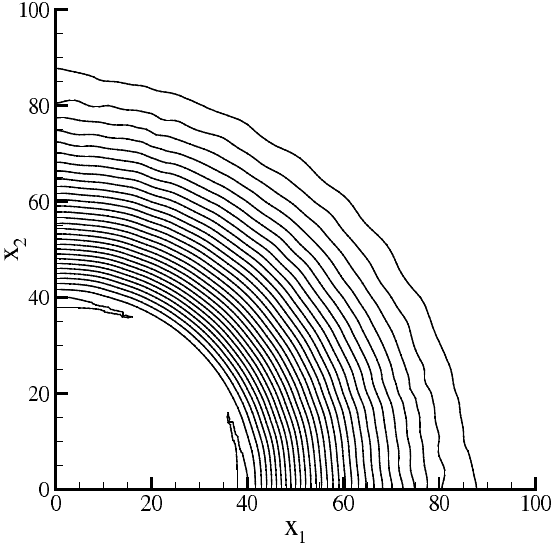}
  \hspace*{\fill}
  \caption{Circular dam break over dry bed, solution at t=1.75s. Left:
    3D view. Right: 30 contour lines between $H=10.2$ and $H=0.2$}
  \label{fig:dam1}
\end{figure}

This is a classical test case where we simulate the complete break of
a circular dam separating a basin filled with water from a dry
area. The computational domain is $[0,100]^2$ and we set $\dx = 1$,
the water filled basin is located at $r = \|\vx\| \leq 60$. In the
basin we set $H_0 = 10$ and elsewhere $H_0=0$ and the initial velocity
is $\vv_0 = (0,0)^{tr}$ in the whole domain. Reference solutions can be found
in \cite{RicchiutoBollermann2009, Seaid2004,
  AlcrudoGarcia-Navarro1993}.

In Fig.~\ref{fig:dam1}, we see a 3D view and contour lines of the
water height, Fig.~\ref{fig:dam2} shows the water height and velocity
at different lines through the domain. The resulting rarefaction wave
is almost perfectly symmetric, the oscillation due to the
reconstruction strategy is restricted to three percent of the water
height (we have $\max_i H_i = 10.269$). We see a small bump at the
drying wetting front, but the front position and velocities are well
represented. Thanks to the new entropy fix, there is no unphysical
shock visible in the transcritical region.

\begin{figure}
  \centering
  \hspace*{\fill}
  \includegraphics{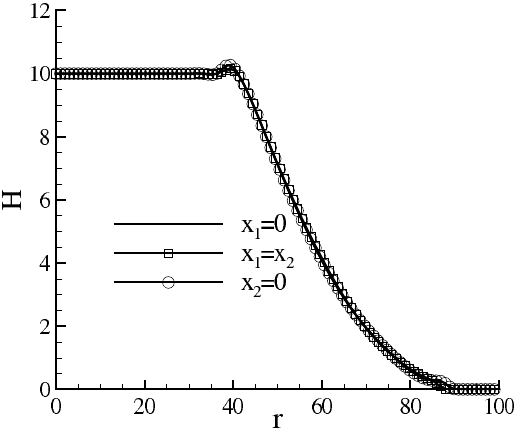}
  \hfill
  \includegraphics{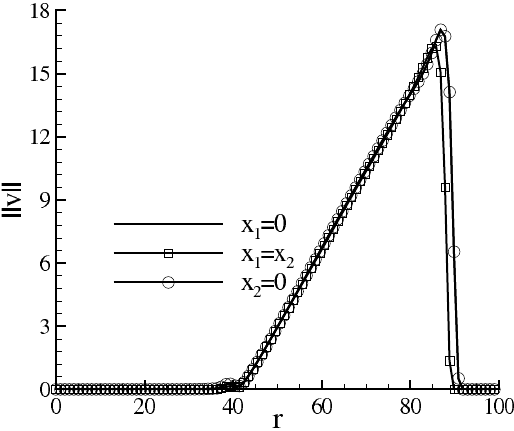}
  \hspace*{\fill}
  \caption{Circular dam break over dry bed, solution at t=1.75s with
    $r=\|\vx\|_2$. Left: free surface. Right: velocity}
  \label{fig:dam2}
\end{figure}

\subsection{Wetting/Drying on a Sloping Shore}
\label{sec:slope}

This test case was proposed by Synolakis in \cite{Synolakis1987} and
computed in e.g. \cite{RicchiutoBollermann2009,
  MarcheBonnetonFabrieEtAl2007}. It describes the run-up and reflection
of a wave on a mounting slope, with the initial solution given as
\begin{equation}
  \label{eq:synolakis}
  H_0(\vx) = \max(f, b(\vx)), \quad \vv_0(\vx) =
  \left(\sqrt{\frac{g}{D}} H_0(\vec x), 0 \right)^T
\end{equation}
where
\begin{equation}
  \label{eq:synf}
  f(\vx) = D + \delta \sech^2(\gamma(x_1 - x_a)).
\end{equation}
As in \cite{RicchiutoBollermann2009, MarcheBonnetonFabrieEtAl2007}, we
set $D=1, \delta = 0.019$ and
\begin{equation}
  \label{eq:synpar}
  \gamma = \sqrt{\frac{3\delta}{4D}}, \qquad x_a =
  \sqrt{\frac{4D}{3\delta}} \mathrm{arcosh}\left(\sqrt{20} \right).
\end{equation}
For the bottom topography we have
\begin{equation}
  \label{eq:synb}
  b(\vx) = b(x_1) =
  \begin{cases}
    0 & \text{$x_1< 2x_a$} \\
    \dfrac{x_1 - 2x_a}{19.85} & \text{else.}
  \end{cases}
\end{equation}
The computational domain is $\Omega = [0,80]\times[0,2]$ and the grid
size $\dx = 0.04$. We prescribe open boundary conditions in $x_1$
direction and periodic ones in $x_2$ direction.

\begin{figure}
  \centering
  \hspace*{\fill}
  \includegraphics{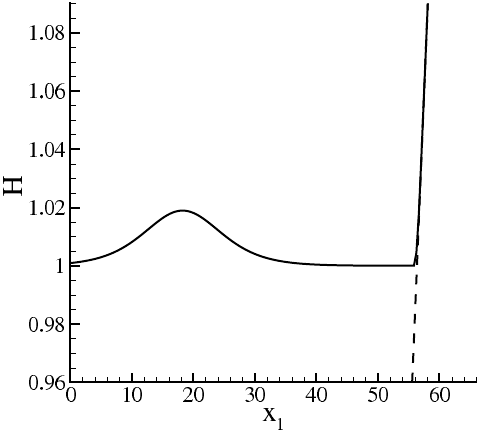} \hfill
  \includegraphics{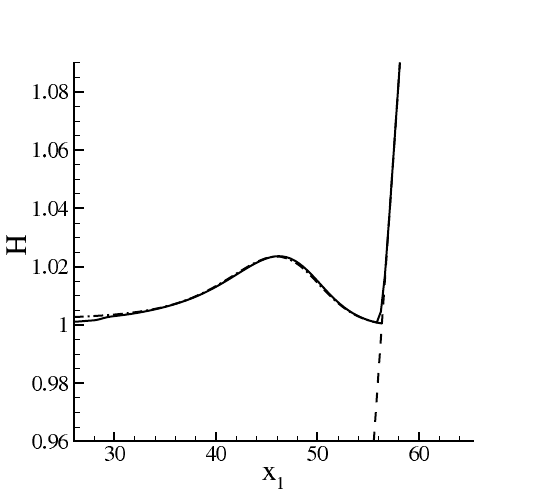}   \hspace*{\fill} 

  \vspace{5mm}

  \hspace*{\fill}
  \includegraphics{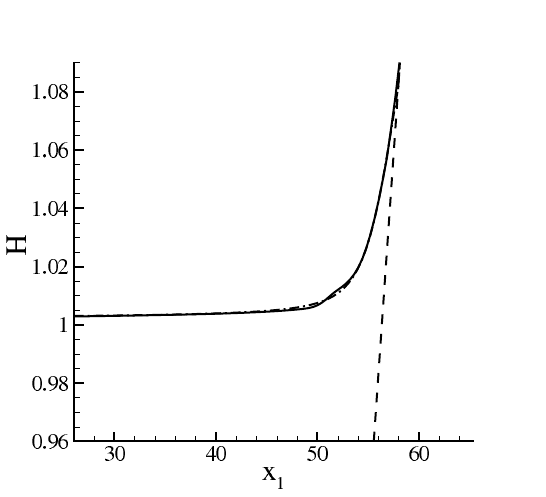} \hfill
  \includegraphics{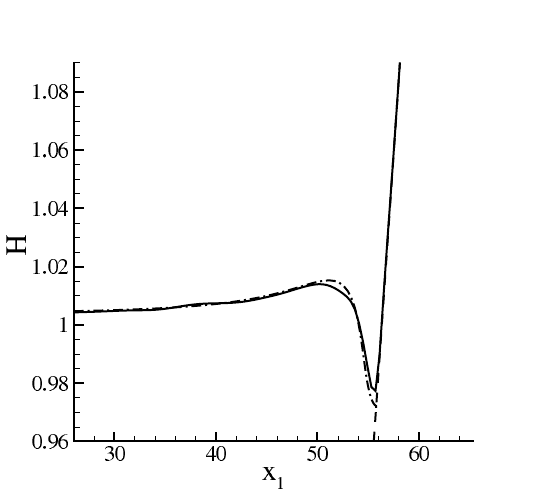}
  \hspace*{\fill}

  \vspace{5mm}

  \hspace*{\fill}
  \includegraphics{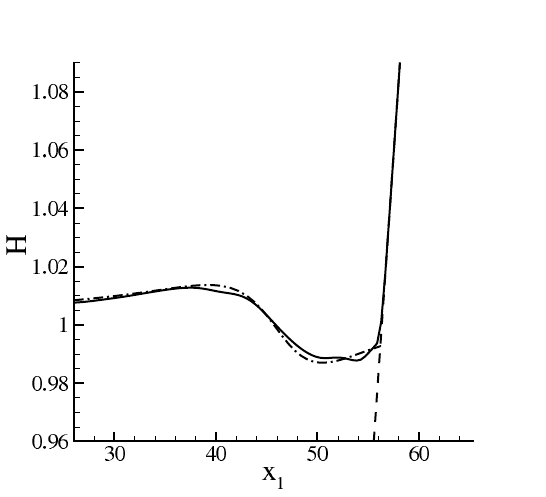}\hfill
  \includegraphics{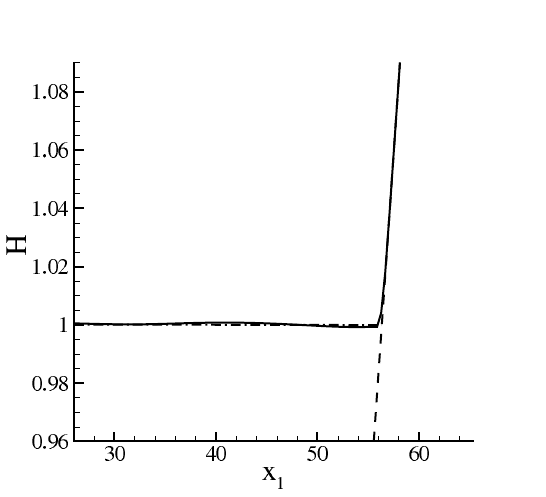}
  \hspace*{\fill}
  \caption{Drying/wetting on a sloping shore, free surface. Solid
    line: numerical results. Dashed-dotted line: Analytical result
    described in \cite{Synolakis1987}. Dashed line: bottom
    height. From top left to bottom right:
    Solutions at times $t=0$, $t=9$, $t=17$, $t=23$, $t=28$ and
    $t=80$.}
  \label{fig:syn}
\end{figure}

In Fig.~\ref{fig:syn} we present the water height during the run-up
and drying process together with the analytical solution. Details on
how to obtain the analytical solution can be found in \cite[Section
3.5.2]{Synolakis1986}. At time $t=9$, the wave has almost reached the
shoreline, and shortly after, at $t=17$, the wave reaches the maximal
run-up on the shore and the drying process starts. During the run-up,
the agreement with the analytical solution is excellent. The drying
process is reproduced very accurate, too, although compared to the
run-up process, there are small deviations between numerical and
analytical solution with respect to the minimum of the water height at
$t=23$. At $t=28$, where the reflected wave starts leaving the domain,
these deviations persist, but stay very small. During the whole
simulation, no oscillations or other perturbations at the
wetting/drying front are visible, demonstrating the well-balancing
capabilities of the scheme. The scheme also returns quickly to the
lake at rest solution presented in the last picture ($t=80$). where
the water surface is almost flat. All in all, the results are very
satisfying and compare well to the other numerical solutions presented
in \cite{RicchiutoBollermann2009, MarcheBonnetonFabrieEtAl2007}.

\subsection{Vacuum occurrence by a double rarefaction wave over a step}
\label{sec:vacuum}

To test how the scheme handles the drying of formerly flooded areas,
we compute a test case proposed in \cite{GallouetHerardSeguin2003}. It
describes two separating waves over a non-flat bottom. The
computational domain is a pseudo-1D channel given as $\Omega = [0,
25]\times [0, 0.5]$. We set the initial free surface height to $H^0
= 10$ and the discharge and bottom topography to
\begin{equation}
  \label{eq:moses_init}
    hv_1(\vx, 0) =
    \begin{cases}
      350 & \text{if $x_1 > 50/3$} \\
      -350 & \text{else}
    \end{cases} \qquad
    b(\vx) =
    \begin{cases}
      1 & \text{if $25/3 < x_1 < 25/2$} \\
      0 & \text{else}
    \end{cases}.
\end{equation}
Like in \cite{GallouetHerardSeguin2003}, the computation is performed
on a grid with 300 grid cells in $x_1$ direction.

In Fig.~\ref{fig:vac}, we show the free surface and the discharge of
the solution at different times $t$. At time $t=0.5$, several waves
have arisen from the interaction of the supersonic flow with the
bottom topography. Due to the reconstruction strategy described in
Sec.~\ref{sec:alg}, the rarefaction waves are smoothed out at the
bottom and show small oscillations at the top. However, for the
following times we see an accurate representation of the waves flowing
over the hump ($t=0.25$) and leaving the domain ($t=0.45,0.65$). No
spurious modes are introduced during the drying process, neither in the
flat region nor at the edges of the hump.
\begin{figure}
  \centering
  \hspace*{\fill}
  \includegraphics{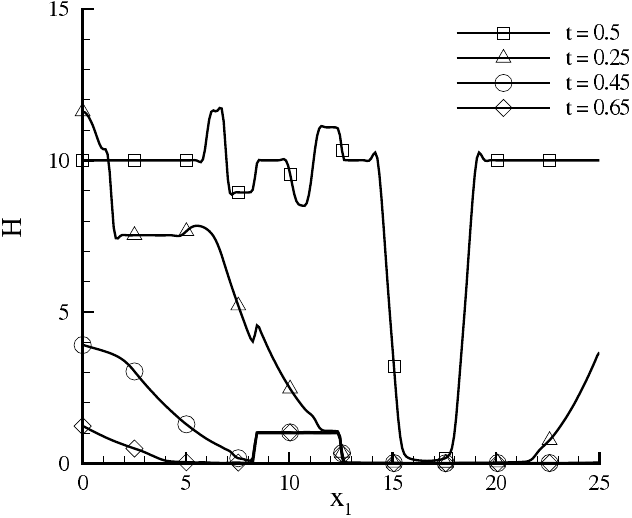}
  \hfill
   \includegraphics{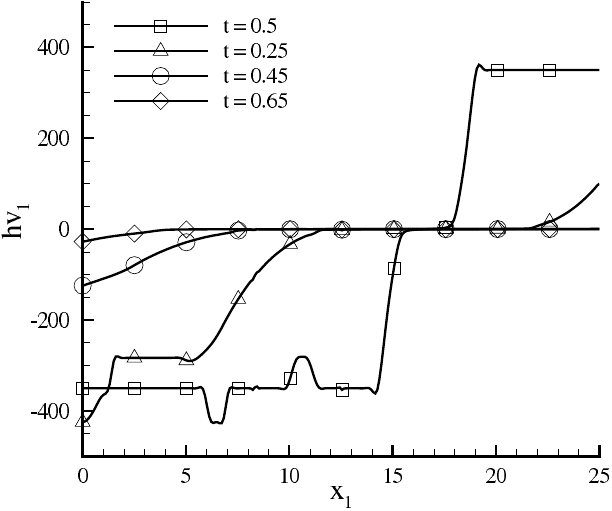}
  \hspace*{\fill}
  \caption{Vacuum occurrence, solution at different times. Left:
    free surface. Right: discharge.}
  \label{fig:vac}
\end{figure}

\subsection{Thacker's Periodic Solutions}
\label{sec:thacker}

We present two exact solutions of (\ref{eq:cSWE}) proposed by Thacker
in \cite{Thacker1981}.  They both describe oscillations of a free
surface in a parabolic basin with a free shoreline. The basin is
defined as 
\begin{equation}
  \label{eq:bthacker}
  b(\vec x) = b(r_c) = -H_0 \left(1 - \frac{r_c^2}{a^2}\right).
\end{equation}
$r_c$ defines the distance from the basin's centre, $H_0$ the height
of the centre and $a$ is a parameter. We will define two functions
$f(\vec x, t)$ that describe solutions of (\ref{eq:cSWE}) with $h(\vec
x, t) = \max(f(\vec x, t) - b(\vec x), 0)$. Both test cases will be
computed on the domain $\Omega=[-2,2]^2$. The presented results have been
performed with a grid size of $\dx = 0.4$. Reference solutions can be
found in \cite{RicchiutoBollermann2009, RicchiutoBollermann2008,
  MarcheBonnetonFabrieEtAl2007}.

\subsubsection{Thacker's Curved Solution}
\label{sec:curved}

\begin{figure}
  \centering
  \hspace*{\fill}
  \includegraphics{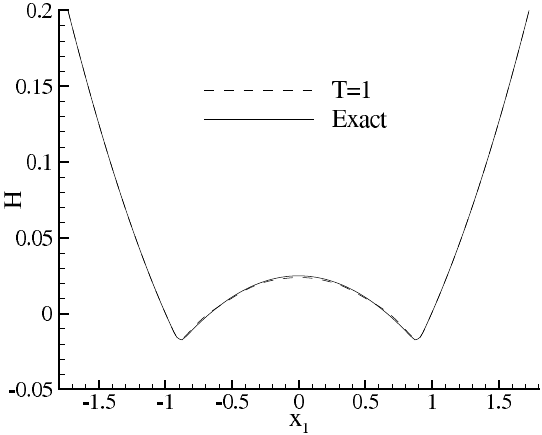}
  \hfill
  \includegraphics{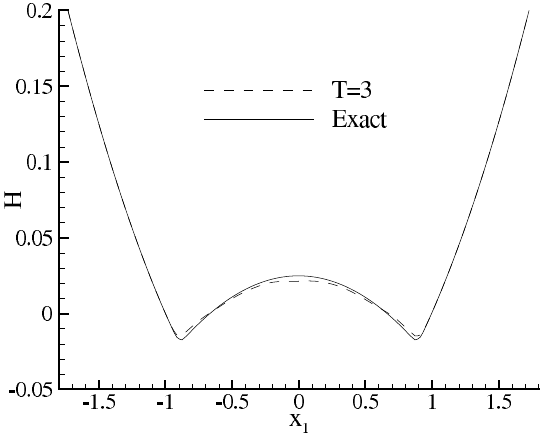}
  \hspace*{\fill} 
  \caption{Thackers curved solutions. Left: $t=T$. Right: $t=3T$}
  \label{fig:curved}
\end{figure}

The first function results in a curved oscillation over $b$, it reads
\begin{equation}
\label{eq:curvedf}
  f(r_c,t) = H_0\left(-1 + \frac{\sqrt{1-A^2}}{1-A\cos(\omega t)} 
      - \frac{r_c^2}{a^2}\left( 1- \frac{1-A^2}{(1-A\cos(\omega t))^2}\right)\right).
\end{equation}
Here, $\omega = \sqrt{8gH_0/a^2}$ is the frequency and for a given
$r_0 > 0$, $A$ is the shape parameter
\[
A = \frac{a^2 - r_0^2}{a^2+r_0^2}.
\]
For the computation we set $a=1$, $H_0=0.1$ and $r_0=0.8$, which
results in an oscillating period of $T\approx 2.22$. The initial
velocity is set to $\vv_0 = (0,0)^{tr}$.

\begin{table}  
  \centering
  \begin{tabular}{cc cc cc c}\hline 
    \# cells & $L^\infty$  & EOC & $L^1$  & EOC& $L^2$  & EOC \\ \hline\hline
    25 $\times$ 25   & 9.8038e-03 & &1.4526e-02 & & 9.2884e-03&  \\ 
    50 $\times$50 &3.6191e-03&1.44 &3.9038e-03&1.90 &2.7762e-03&1.74 \\
    100 $\times$100 &1.5252e-03&1.25 &1.3127e-03&1.57 &9.5937e-04&1.53 \\
    200 $\times$200 &1.1820e-03&0.37 &4.6649e-04&1.49 &3.8549e-04&1.32 \\
    400 $\times$400 &5.3221e-04&1.15 &1.7806e-04&1.39 &1.4907e-04&1.37 \\ \hline
  \end{tabular}
  \caption{Experimental order of convergence (EOC) for Thackers curved solution. Error in
    water height in different norms.}
  \label{tab:curved}
\end{table}

\subsubsection{Thacker's Planar Solution}
\label{sec:planar}

The second solution is a planar surface rotating around the basin. The
corresponding function is
\begin{equation}
  \label{eq:planarf}
f(\vec x, t) = \frac{\eta H_0}{a^2}(-\eta + 2(\vec x - \vec x_C)\cdot
(\cos(\omega t), \sin(\omega t))^{tr})
\end{equation}
with $\omega = \sqrt{2gH_0/a^2}$ the frequency and $\eta$ another
parameter.  Here, we set $a=1$, $H_0=0.1$ and $\eta=0.5$. The
resulting period is then $T\approx 4.44$. The initial velocity in the
wetted domain is given as $\vv_0 = (0, \eta\omega)^{tr}$.

\begin{table}  
  \centering
  \begin{tabular}{cc cc cc c}\hline 
    \# cells & $L^\infty$  & EOC & $L^1$  & EOC& $L^2$  & EOC \\ \hline\hline
    25 $\times$ 25   & 3.3855e-02 & &4.6507e-02 & & 2.7148e-02&  \\ 
    50 $\times$50 &1.7455e-02&0.96 &1.8179e-02&1.36 &1.1660e-02&1.22 \\
    100 $\times$100 &1.0543e-02&0.73 &1.0486e-02&0.79 &6.6938e-03&0.80 \\
    200 $\times$200 &8.2376e-03&0.36 &8.3640e-03&0.33 &5.4658e-03&0.29 \\
    400 $\times$400 &7.1238e-03&0.21 &7.8559e-03&0.09 &5.1747e-03&0.08 \\ \hline
  \end{tabular}
  \caption{Experimental order of convergence (EOC) for Thackers planar solution. Error in
    water height in different norms.}
  \label{tab:planar}
\end{table}  

We present the water height after one and three oscillations along the
line $x_2=0$ in Fig.~\ref{fig:curved} for the curved solution and in
Fig.~\ref{fig:planar} for the planar solution. The exact solution is
very well reproduced, independent of the shape of the initial
solution. We can see a slight smearing after three periods for the
curved solution, where the maximum value at the centre is reduced and
the drying/wetting interface has been pushed outward. Similarly, the
interface of the planar solution has also moved a little bit inwards
at $t=3T$. Again, for both solutions there is no production of
spurious waves at the dry boundary.

In Tables~\ref{tab:curved} and \ref{tab:planar} we present a
convergence study for the two test cases. The experimental order of
convergence for the curved solution is well better than one, which
meets our expectations. The errors are slightly better than in
\cite{RicchiutoBollermann2008}.
\begin{figure}
  \centering
  \hspace*{\fill}
  \includegraphics{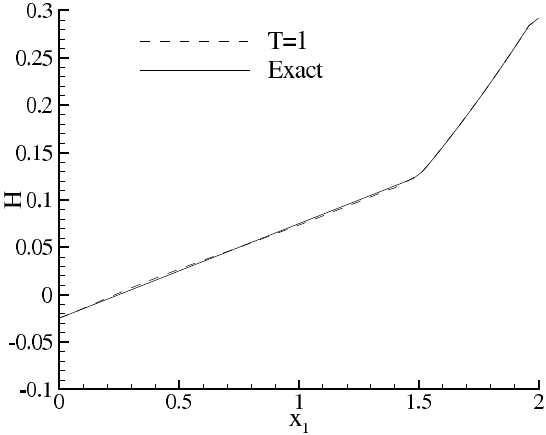}
  \hfill
  \includegraphics{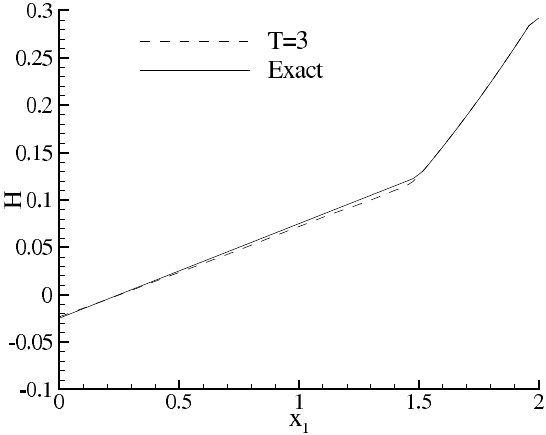}
  \hspace*{\fill}
  \caption{Thackers planar solutions. Left: $t=T$. Right: $t=3T$}
  \label{fig:planar}
\end{figure}
For the planar solution, however, the order quickly drops to zero.
The problem seems to raise from the boundary of the wetted domain,
where we have supersonic velocities tangential to the bottom slope. So
the problem might be related to the evolution operators, as they
produce inexact solutions in other supersonic situations as well, see
the discussion in Section~\ref{sec:entropy} and \ref{sec:conc}.

\subsection{Wave Run-up on a Conical Island}
\label{sec:island}

\begin{figure}
  \centering
  \hspace*{\fill}
  \includegraphics{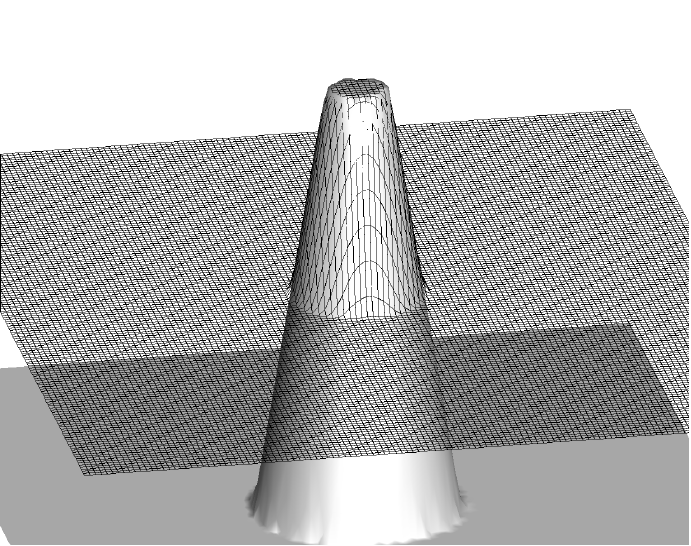}
  \hspace*{\fill}
  \includegraphics{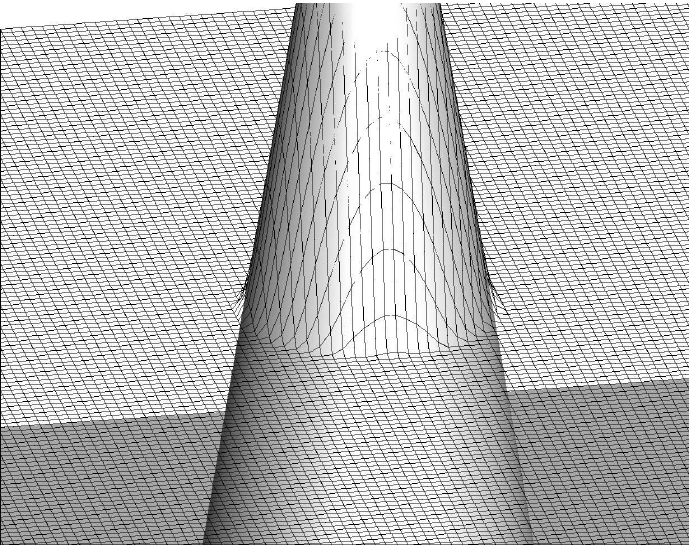}
  \hspace*{\fill}
  \caption{Circular island, lake at rest situation at $t=5$. Left:
    whole domain.  Right: zoom on island.}
  \label{fig:islandlar}
\end{figure}

In this case we simulate the run-up of a solitary wave over a conical
island. It has been performed experimentally at the U.S. Army Engineer
Waterways Experiment Station, see
\cite{BriggsSynolakisHarkinsEtAl1995, tsunami}. The computational
domain is $\Omega = [0, 25]\times[0, 30]$ and we set $\dx = 0.2$. The
centre of the island is located at $\vx_C = (12.5, 15)$ and with $r=
\|\vx - \vx_C\|$ its shape is given by
\begin{equation}
  \label{eq:island}
  b(r) =
  \begin{cases}
    0.625 & \text{$r \leq 1.1$} \\
    (3.6 - r)/4 & \text{$r \leq 3.6$} \\
    0 & \text{else.}
  \end{cases}
\end{equation}
The initial free surface is given by $H_0 = 0.32$. We start by giving
an example of the well-balancing capabilities of the scheme and
compute the lake at rest situation until $t=5$. The results are shown
in Fig.~\ref{fig:islandlar}. The lake at rest is perfectly preserved,
which is confirmed by the errors given in
Table~\ref{tab:islandlar}. For all the 3D views of this example, the
vertical axis representing the free surface was scaled by a factor of
25 to emphasise the results.

\begin{table}
  \centering
  \begin{tabular}{cccc}
    \hline
    & $L^\infty$ & $L^1$ & $L^2$ \\ \hline
    $e_H$ &4.44089e-16 &6.50361e-14 &3.14147e-15 \\
    $e_{v_1}$ &2.69215e-15 &2.53222e-13 &1.24917e-14 \\
    $e_{v_2}$ &2.92617e-15 &2.73056e-13 & 1.33148e-14\\
  \end{tabular}
  \caption{Lake at rest around a conical island. Errors at $t=5$.}
  \label{tab:islandlar}
\end{table}

\begin{figure}
  \centering
  \hspace*{\fill}
  \includegraphics{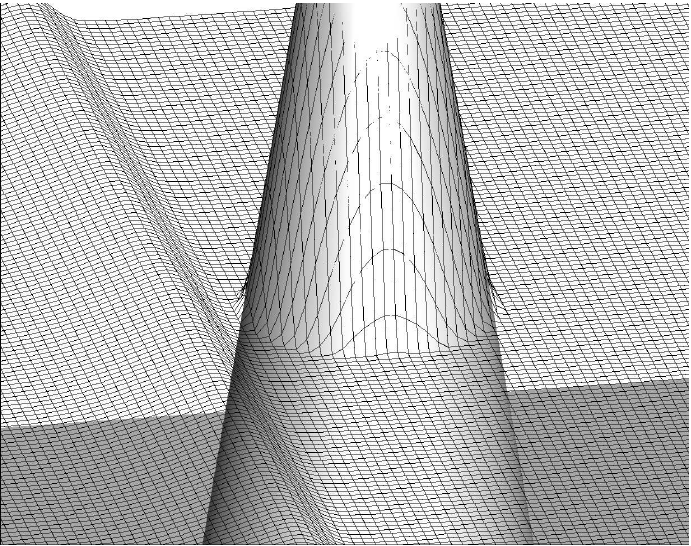}
  \hspace*{\fill}
  \includegraphics{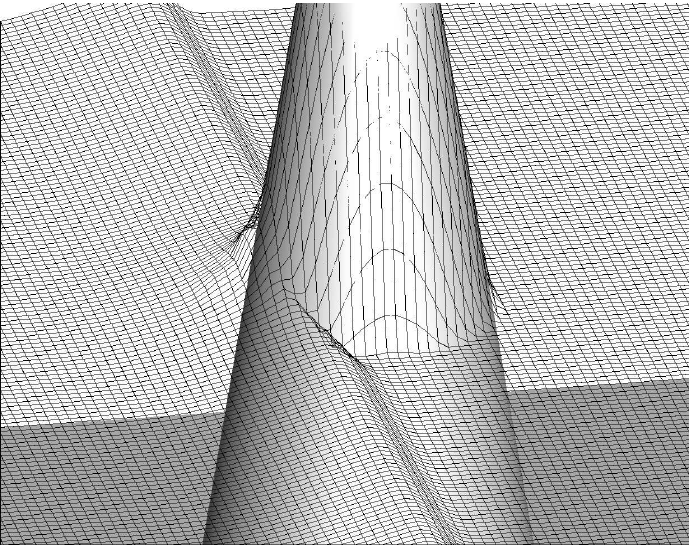}
  \hspace*{\fill}
  \caption{Run-up on a circular island. Left: wave approaching island, $t=7.9$.
    Right: run-up at front of the island, $t=9.1$.}
  \label{fig:island1}
\end{figure}

We now compute the actual wave where at time $t=0$ a wave enters the
computational domain at $x_1=0$. The height of the wave is given by
\begin{equation*}
  H(0,y,t) = H_0 + \alpha H_0 \left(\frac{1}{\cosh( \xi \sqrt{gH_0/L} (t-3.5)}\right)^2
\end{equation*}
with $L= 15$, $\alpha = 0.1$ and $\xi =
\sqrt{3\alpha(1+\alpha)L^2/(4H_0^2)}$, cf. \cite{HubbardDodd2002,
  MarcheBonnetonFabrieEtAl2007}.
\begin{figure}
  \hspace*{\fill}
  \includegraphics{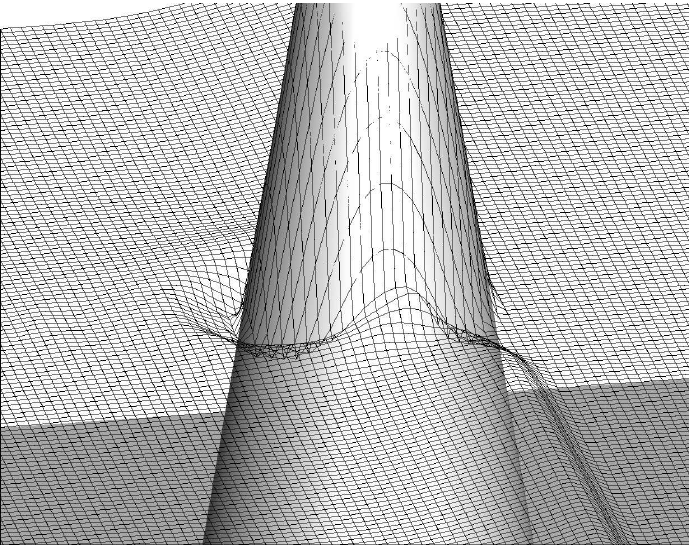}
  \hspace*{\fill}
  \includegraphics{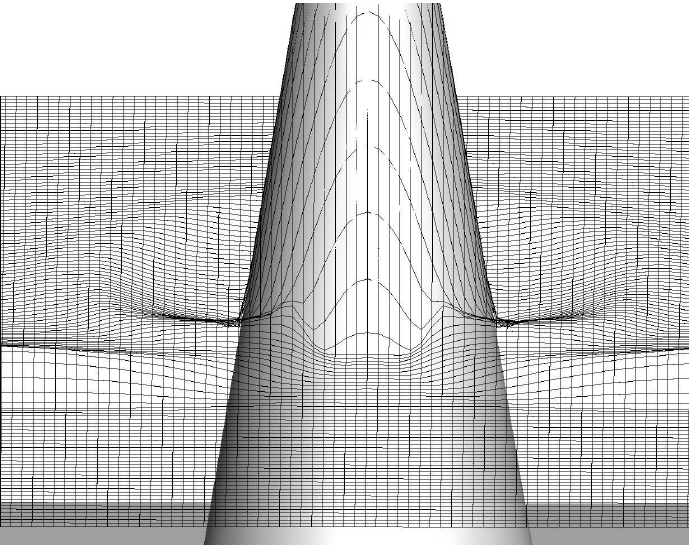}
  \hspace*{\fill}
  \centering
  \caption{Run-up on a circular island. Left: lateral run-up, $t=10.7$. Right:
    symmetric waves around the island, $t=12.1$.}
  \label{fig:island2}
\end{figure}
We present 3D views of the solution in Fig.~\ref{fig:island1},
\ref{fig:island2} and~\ref{fig:island3}.  Fig.~\ref{fig:island1} shows the wave
approaching the island and the instant of its maximal run-up. Here, as
in all the following figures, the region in front of the wave remains
completely unaffected, once again confirming the well-balancing of the
scheme. Due to the run-up, the wave is slowed down at the island,
finally resulting in a reflection of the wave presented in
Fig.~\ref{fig:island2}. This leads to the formation of two symmetric
waves surrounding the island with a noticeable run-up also on the
sides of the island. Away from the island, we observe the circular
reflected wave approaching the boundaries of the computational
domain. Behind the island, the lateral waves reunite and produce a
second peak in the run-up on the lee side of the island, see
Fig.~\ref{fig:island3}. Finally, the wave leaves the domain and the
surface returns to the lake at rest, as shown in the last picture.

\begin{figure}
  \hspace*{\fill}
  \includegraphics{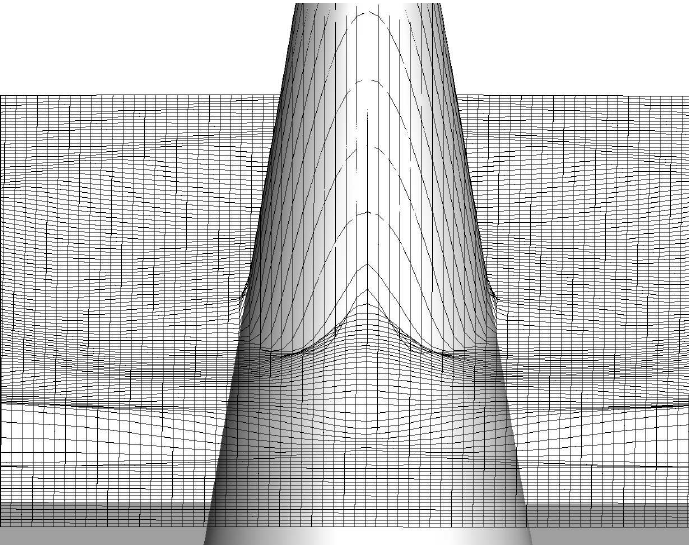}
  \hspace*{\fill}
  \includegraphics{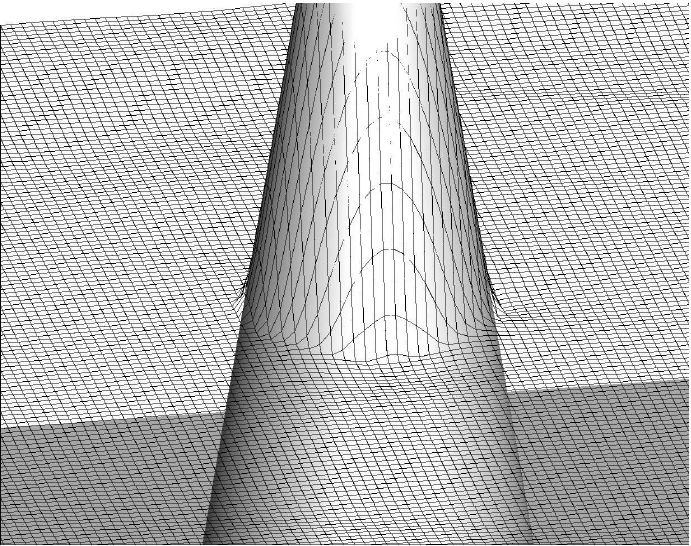}
  \hspace*{\fill}
  \centering
  \caption{Run-up on a circular island. Left: run-up behind the
    island, $t=13.7$. Right: reestablished lake at rest at $t=40$.}
  \label{fig:island3}
\end{figure}

\begin{figure}
\centering
  \includegraphics{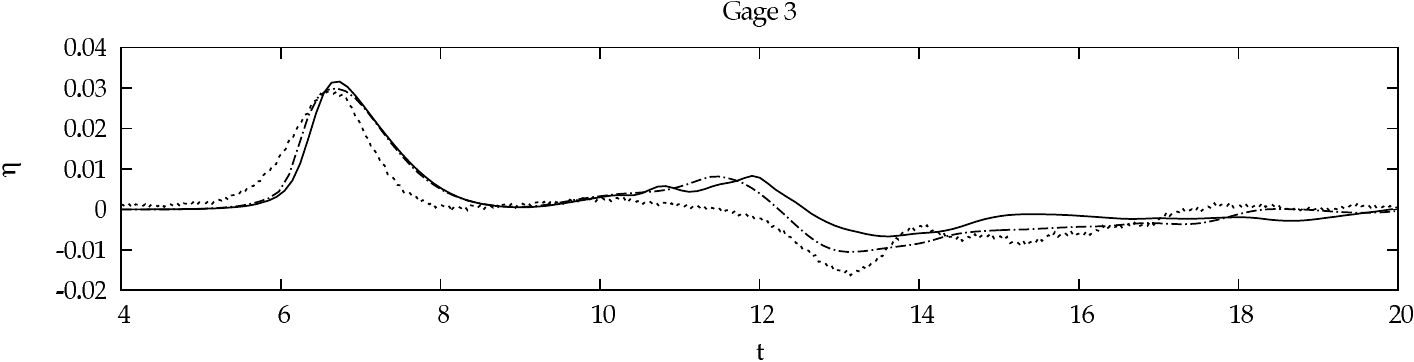}\\[2mm]
  \includegraphics{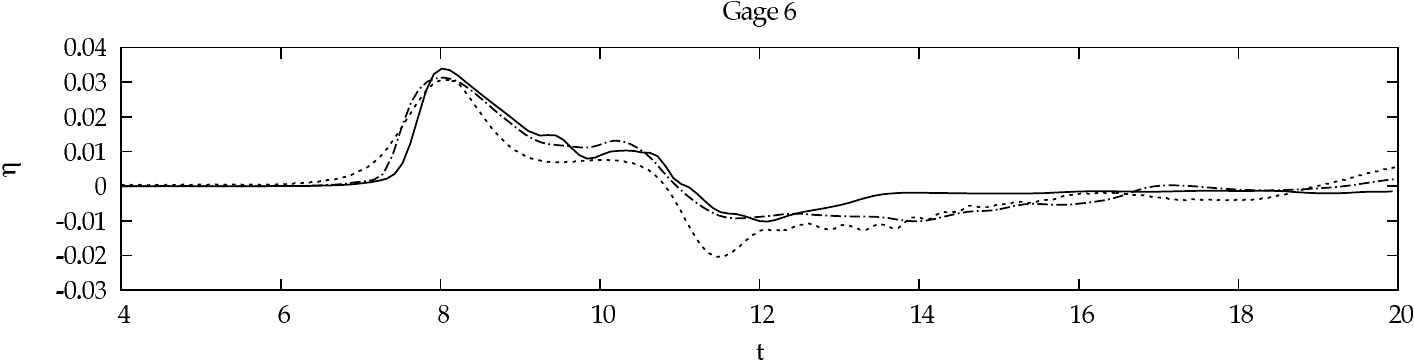}\\[2mm]
  \includegraphics{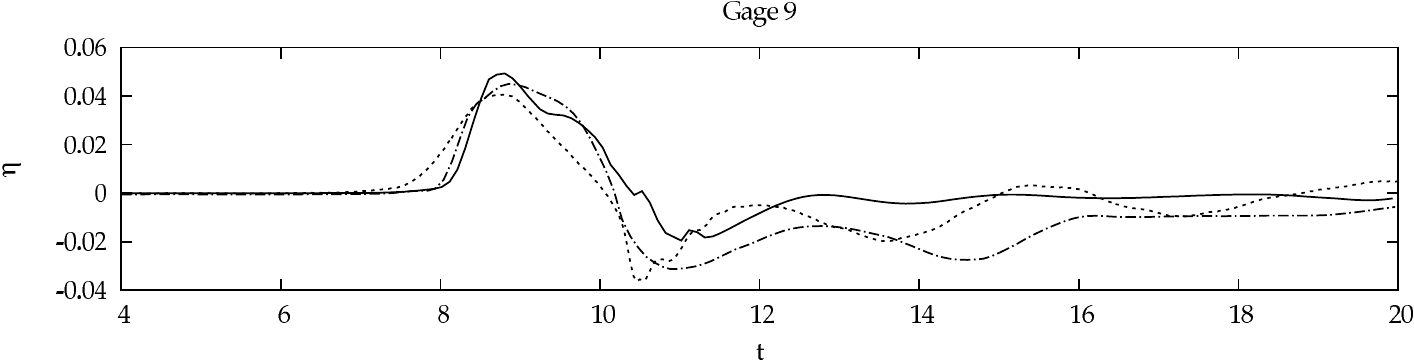}\\[2mm]
  \includegraphics{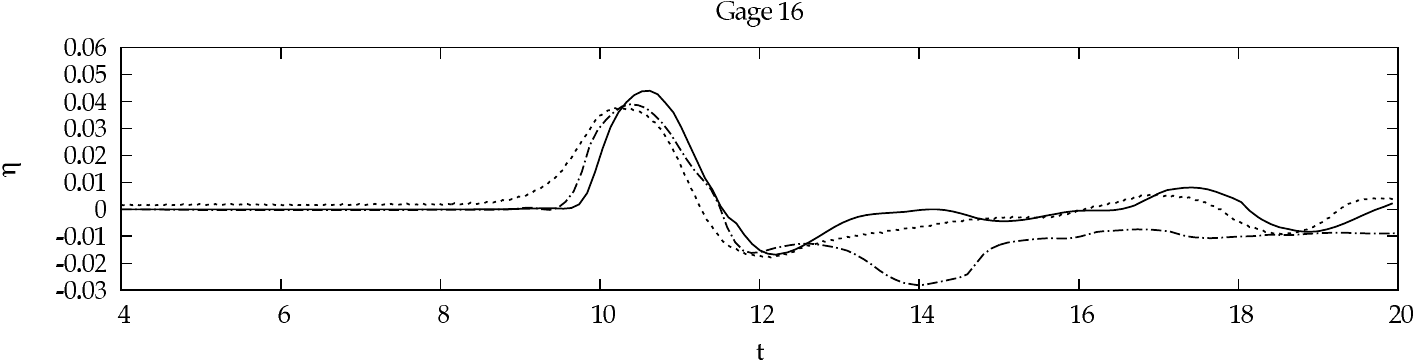}\\[2mm]
  \includegraphics{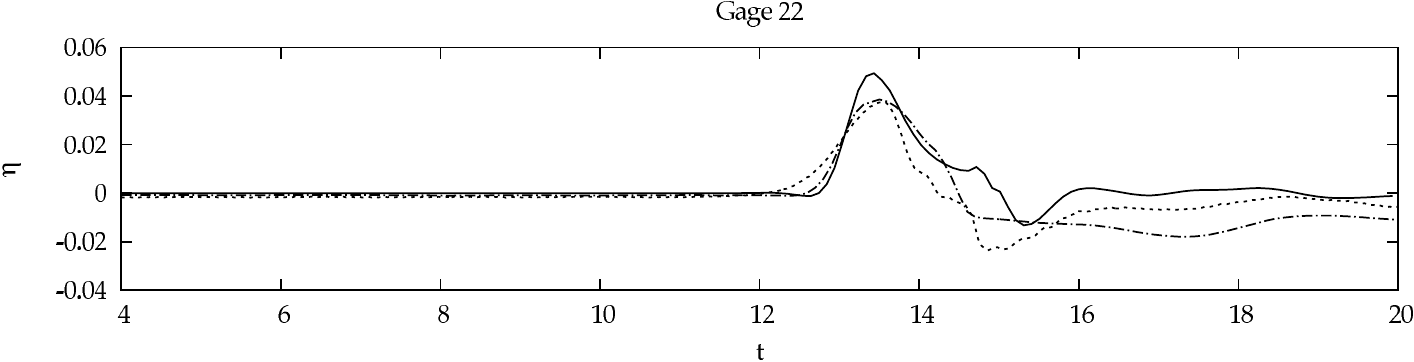}
  \caption{Run-up on a circular island. Time variation of the free
    surface $\eta = H - H_0$ at wave gages 6, 9, 16, and 22 of
    experiment from \cite{BriggsSynolakisHarkinsEtAl1995}. Dotted line:
    experimental data from \cite{tsunami}. Solid line: FVEG
    scheme. Dashed-dotted line: RD scheme from \cite{RicchiutoBollermann2009}}
  \label{fig:island_gages}
\end{figure}

\begin{figure}
\centering
  \includegraphics{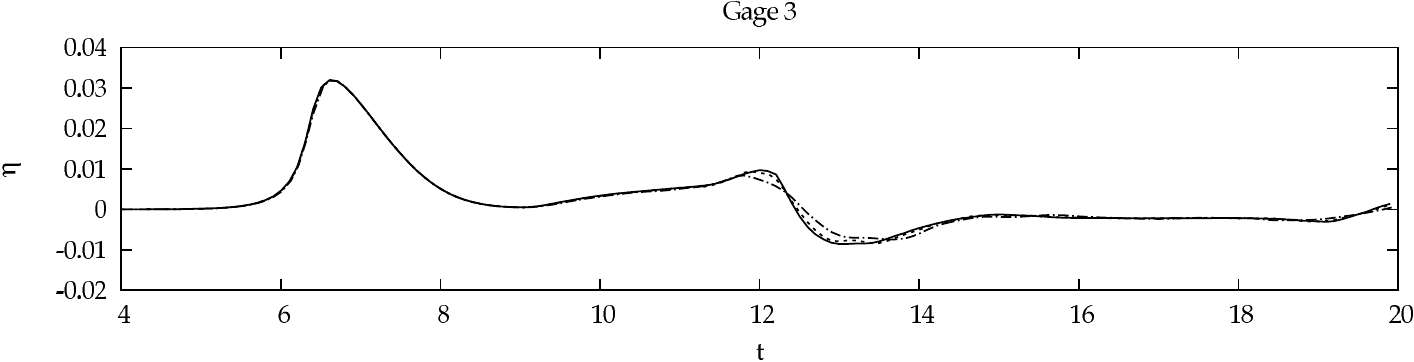}\\[2mm]
  \includegraphics{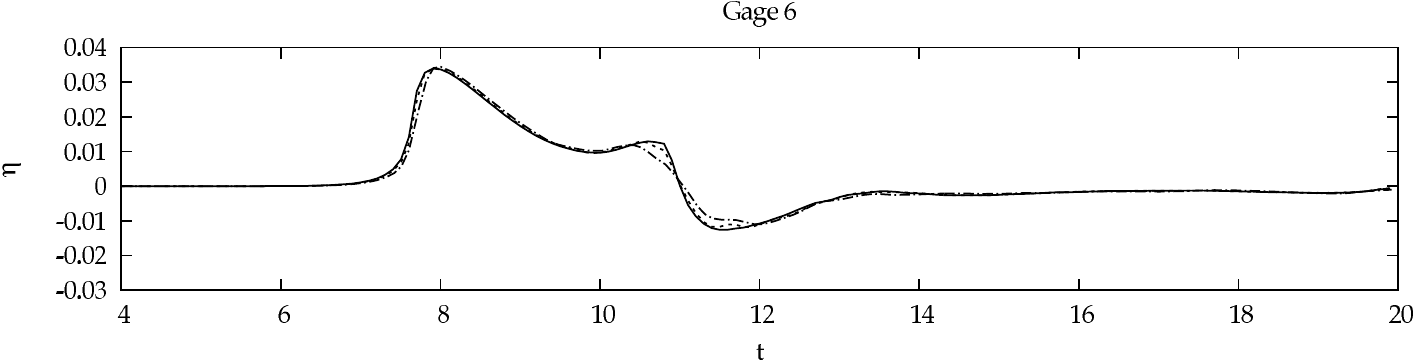}\\[2mm]
  \includegraphics{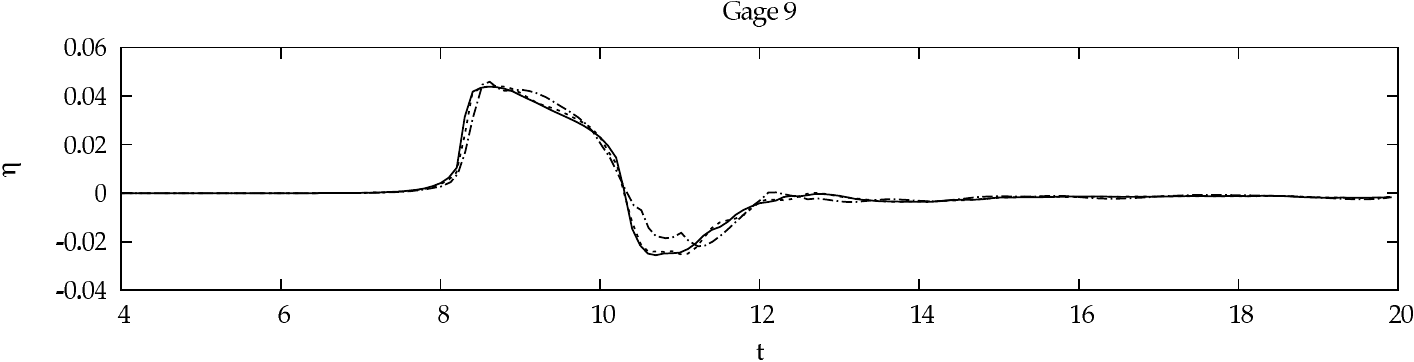}\\[2mm]
  \includegraphics{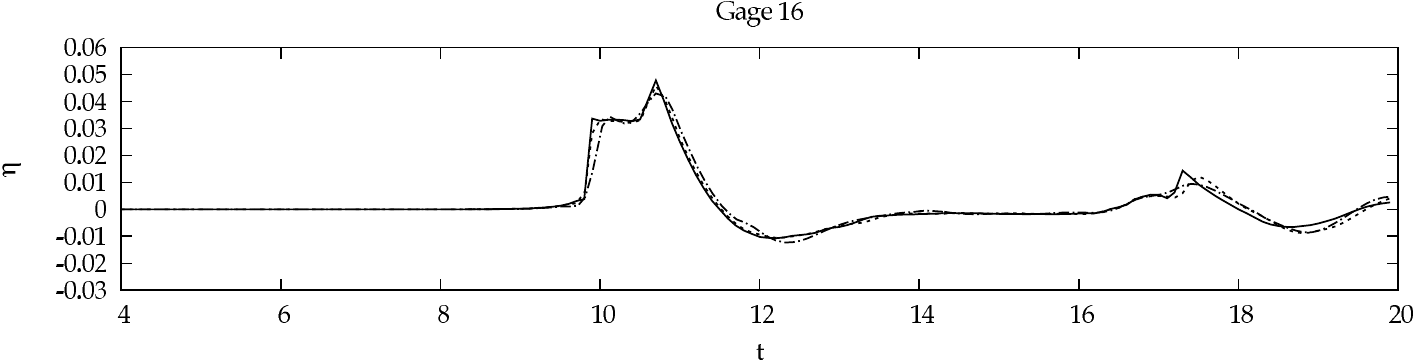}\\[2mm]
  \includegraphics{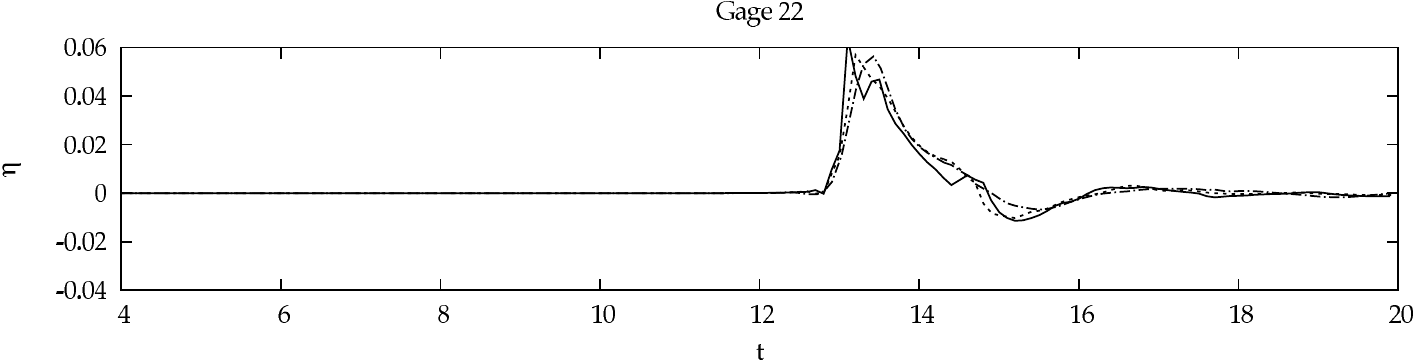}
  \caption{Run-up on a circular island. Time variation of the free
    surface $\eta = H - H_0$ at wave gages 6, 9, 16, and 22 of
    experiment from \cite{BriggsSynolakisHarkinsEtAl1995}, convergence of
    FVEG scheme. Dashed-dotted line: $\dx = 1/8$ Dotted line: $\dx = 1/16$
     Solid line: $\dx = 1/32$}
  \label{fig:island_gages_conv}
\end{figure}


Moreover we show the evolution of the free surface at chosen points in
Fig.~\ref{fig:island_gages}. The position of the gages is given by
$\vx_3 = (6.36, 14.25)$, $\vx_6 = (8.9, 15)$, $\vx_9 = (9.9, 15)$,
$\vx_{16} = (12.5, 12.42)$ and $\vx_{22} = (15.1, 15)$. For comparison
we also display the measured data obtained from \cite{tsunami} as well
as the results from the residual distribution (RD) scheme presented in
\cite{RicchiutoBollermann2009}, which have been computed on a
comparable grid\footnote{The unstructured triangulation used in
  \cite{RicchiutoBollermann2009} consists of 19824 elements, whereas
  the grid used here has 18750 cells. }. Both numerical schemes
produce steeper wave fronts than the experiment, which results from
the lack of dissipation in the shallow water model. The FVEG scheme
shows a stronger steepening than the RD scheme, and also predicts
slightly higher waves at all gages. At gages 3 and 6, the run-down
process is pretty similar with both schemes, while the FVEG scheme
returns quicker to a constant water height. Both schemes produce less
accurate results at gage 9. The FVEG line is shifted to the upper
left, thus resulting in a less pronounced run-down and again it
returns quickly to a constant height. The RD scheme provides smoother
results with a better approximation of the maximal run-down, but stays
below the original water height for a longer period. At gage 16, the
FVEG scheme gives a quite accurate representation of the wave, where
the RD scheme introduces an undershoot after the first wave. At the
last gage, the graph of the FVEG scheme is similar to the measurement,
but somewhat shifted to the upper right. This is probably due to the
over-prediction of the maximal wave height. The RD schemes gives a
better approximation of the maximal height, but the following graph is
smoothed out stronger. Finally, in Fig.~\ref{fig:island_gages_conv},
we present solutions at the gages for different grid resolutions,
computed with the FVEG scheme. We can clearly see that the scheme
converges. The steepening of the fronts becomes more pronounced and
the perturbations during the run-down vanish on the finer
grids. However, the main features are already captured on the
relatively coarse grid used for the simulation in
Fig.~\ref{fig:island_gages}. All in all, the FVEG scheme produces a
good reproduction of the wave, and the results are clearly competitive
to other numerical schemes like the ones presented in
\cite{HubbardDodd2002,RicchiutoBollermann2009}.

\section{Conclusions and Outlook}
\label{sec:conc}

We presented an approach to ensure positivity of the water height for general
finite volume schemes without affecting the global time step. This was achieved
by limiting the outgoing fluxes of a cell whenever they would create negative
water height. Physically, this corresponds to the absence of fluxes in the presence
of vacuum. A splitting of advective and gravity driven parts of the flux
preserved the well-balancing. In the context of FVEG schemes, we applied these
techniques to develop a positivity preserving scheme which is well-balanced
in the presence of dry areas. The scheme can also properly
handle sonic rarefaction waves, thanks to a new entropy correction based on the
evolution operators. We tested the scheme on a number of problems and in general
obtained satisfying results.

However, the discussion of the entropy fix (see Section~\ref{sec:entropy})
revealed that  in supersonic or transonic regimes the linearised wave cones used
in the EG operator do not reflect the physical domain of dependence adequately.
We conjecture that this is the origin of the loss of convergence for Thacker's 
planar solution (see Section~\ref{sec:planar}), since here the velocities
tangential to the boundary are larger than the (vanishing) gravitational speeds.
Two issues should be analysed further. As mentioned above, the first is the
linearisation strategy used in (\ref{eq:lSWE}). With the entropy fix from
\ref{sec:entropy} we made a first step towards a more sophisticated strategy
adapted to the state of flow. The other issue is related to the approximation of
the resulting linearised evolution operators. The approximations used here and
in \cite{LukacovaNoelleKraft2007} are based on the approximations from
\cite{LukacovaMortonWarnecke2004}, where they have been developed for the wave
equations. Now for this system the second eigenvalue is always zero, so the
sonic cone is never shifted in space with respect to the prediction point. An
approximation taking this shift into account should give more accurate results
in the critical regime.

Another possibility to improve the results is the introduction of friction
terms. This could be helpful to control the velocities at the dry
boundary by slowing down the waves near the shoreline. Finally we will
combine the new scheme with the adaptation techniques presented in
\cite{BollermannLukacovaNoelle2009}.

\section*{Acknowledgements}

This work was supported by DFG-Grant NO361/3-1 ``Adaptive semi-implicit FVEG
methods for multidimensional systems of hyperbolic balance laws''.

\bibliographystyle{abbrv}
\bibliography{/home/externe/andreas/bib/arbeit}

\end{document}